\newtheorem{defn}{Definition}
\newtheorem{thm}{Theorem}
\newtheorem{lem}{Lemma}
\newtheorem{rmk}{Remark}
\newtheorem{exam}{Example}
\newtheorem{coro}{Corollary}
\newcommand{\bN}{\mathbb{N}}
\newcommand{\bR}{\mathbb{R}}
\newcommand{\cB}{\mathcal{B}}
\newcommand{\cF}{\mathcal{F}}
\newcommand{\cG}{\mathcal{G}}
\newcommand{\cH}{\mathcal{H}}
\newcommand{\cI}{\mathcal{I}}
\newcommand{\cM}{\mathcal{M}}
\newcommand{\cR}{\mathscr{R}}
\newcommand{\cT}{\mathcal{T}}
\newcommand{\cU}{\mathcal{U}}
\DeclareMathOperator{\dist}{d}
\DeclareMathOperator{\rmd}{d\!}
\begin{document}
\title{The Necessity of Nowhere Equivalence\footnote{
The authors respectfully dedicate this work to Professor Peter A. Loeb on the occasion of his eightieth birthday.
They are also grateful to Xiang Sun for his help. This research was partially supported by the NUS grants R-122-000-227-112 and R-146-000-215-112.}}
\author{Wei~He\thanks{Department of Economics, The Chinese University of Hong Kong, Shatin N.T., Hong Kong. E-mail: hewei@cuhk.edu.com.}
\and
Yeneng~Sun\thanks{Department of Mathematics, National University of Singapore, 10 Lower Kent Ridge Road, Singapore 119076. E-mail:  ynsun@nus.edu.sg.}
}
\date{This version: \today}
\maketitle

\abstract{We prove some regularity properties (convexity, closedness, compactness and preservation of upper hemicontinuity) for distribution and regular conditional distribution of correspondences under the nowhere equivalence condition. We show the necessity of such a condition for any of these properties to hold. As an application, we demonstrate that the nowhere equivalence condition is satisfied on the underlying agent space if and only if pure-strategy Nash equilibria exist in general large games with any fixed uncountable compact action space.
\bigskip

Keywords: Measurable correspondence; Measurable selection; Nowhere equivalence; Distribution of correspondence; Regular conditional distribution of correspondence; Nash equilibrium

}

\newpage
\tableofcontents
\newpage

\section{Introduction}\label{sec-intro}

The theory of correspondences, which has important applications in a variety of areas (including optimization, control theory and mathematical economics), has been studied extensively. However, basic regularity properties on the distribution of correspondences such as convexity, closedness, compactness and preservation of upper hemicontinuity may all fail when the underlying probability space is the Lebesgue unit interval; see, for example, \cite{KS2009} and \cite{Sun1996}. These issues were resolved in \cite{Sun1996} by considering a class of rich measure spaces, the so-called Loeb measure spaces constructed from the method of nonstandard analysis.\footnote{See \cite{Loeb1975} and \cite{LW2015} for the construction of Loeb spaces.} It was further shown in \cite{KS2009} that the abstract property of saturation\footnote{As noted in \cite{HK1984}, atomless Loeb probability spaces are saturated.
For some other applications of Loeb and saturated probability spaces, see, for example, \cite{CP2014}, \cite{Jin2015}, \cite{DS2007}, \cite{FK1996}, \cite{Keisler1997}, \cite{KSagara2016}, \cite{LS2007}, \cite{Sagara2015}, \cite{Sun2016}, and \cite{SZ2015}.} on a probability space is not only sufficient but also necessary for any of these regularity properties to hold.\footnote{When the target space is a Banach space, one can also consider Bochner and Gelfand integration of correspondences. The same kind of regularity properties were shown to hold under Loeb/saturated probability spaces in \cite{Podczeck2008}, \cite{Sun1997} and \cite{SY2008}, while the necessity of saturation for these properties was indicated in \cite{Podczeck2008} and \cite{SY2008}. A related issue on the purification of measure-valued maps on Loeb/saturated probability spaces was considered in \cite{LS2006}, \cite{LS2009} and \cite{Podczeck2009} with the necessity of saturation in \cite{LS2009} and \cite{Podczeck2009}.}

Theorem 3B.7 of \cite[p. 47]{FK2002} by Fajardo and Keisler indicated that a saturated probability space is necessarily rich with measurable sets (also \cite[Corollary 4.5]{HK1984} by Hoover and Keisler implicitly) in the sense that any of its nontrivial sub-measure space is not countably generated module null sets. However, standard probability spaces such as complete separable metric spaces with Borel probability measures are only countably generated (thus not saturated). To allow the possibility of working with such standard  probability spaces, this paper uses the condition of ``nowhere equivalence'' to characterize some general results on correspondences and the existence of Nash equilibrium in large games.

As noted in  \cite{HSS2017}, the nowhere equivalence condition was motivated by the fact that various equilibrium properties in economics may require different agents with the same characteristic to choose different actions. Thus, one needs to distinguish the $\sigma$-algebra $\cT$ in a probability space $(T,\cT,\lambda)$ (modeling the space of agents) from the $\sigma$-algebra $\cF$ generated by the mapping specifying the individual characteristics. The condition captures the idea that for any nontrivial collection $D$ of agents, the $\sigma$-algebra $\cT$ is \textit{richer} than its sub-$\sigma$-algebra $\cF$ when they are restricted to $D$. Formally, let $(T, \cT, \lambda)$ be an atomless probability space and $\cF$ a countably generated sub-$\sigma$-algebra of $\cT$. Then $\cT$ is said to be nowhere equivalent to $\cF$ if for any $D \in \cT$, the restrictions of the two $\sigma$-algebras to $D$ do not coincide.\footnote{See Definition 1 in \cite{HSS2017} and Definition \ref{defn-nowhere_equiv} in Section \ref{sec-defn} below.}

The first aim of this paper is to study the regularity properties on the distribution of a correspondence. Let $(T, \cT, \lambda)$ be an atomless probability space and $\cF$ a countably generated sub-$\sigma$-algebra of $\cT$. If a correspondence $G$ is $\cF$-measurable and its selections are allowed to be $\cT$-measurable, then Theorem~\ref{thm-distribution} below shows that any of the regularity properties on the distribution of a correspondence as discussed above holds if and only if $\cT$ is nowhere equivalent to $\cF$. It is easy to see that an atomless probability space is saturated if and only if it is nowhere equivalent to any countably generated sub-$\sigma$-algebra. Thus, Theorems~\ref{thm-distribution} goes beyond the characterization results in \cite{KS2009} for saturated probability spaces to cover the case of non-saturated probability spaces such as the standard probability spaces. We may point out that to obtain the necessity of nowhere equivalence, one needs to construct considerably more complicated counterexamples than those used in \cite{KS2009}, while the necessity of saturation can often be obtained by modifying a counterexample based on the Lebesgue measure space to a non-saturated probability space via a measure-preserving mapping.\footnote{See \cite{KS2009}, \cite{KRSY2013} and \cite{QY2014}.}

The second aim of this paper is to extend those regularity properties to regular conditional distribution of correspondences.\footnote{To the best of our knowledge, this paper is the first to consider the regular conditional distribution of correspondences.} Based on the nowhere equivalence condition, we show that the set of regular conditional distributions induced by all the $\cT$-measurable selections of an $\cF$-measurable correspondence will satisfy the usual regularity properties of convexity, closedness, compactness and preservation of upper hemicontinuity. Since the results for distribution of correspondences are special cases of those results for the regular conditional distribution of correspondences, the necessity of nowhere equivalence thus follows from any of the regularity properties on regular conditional distribution of a correspondence.

The third aim of this paper is to consider an application in large games. The necessity of nowhere equivalence has been shown for the existence of pure strategy equilibria in a sequence of large games with the same agent space but different action spaces in \cite{HSS2017}. If one restricts attention to large games with a particular uncountable action space such as the unit interval, it is not known whether the nowhere equivalence condition is necessary. However, Theorem 4.6 of \cite{KS2009} does show that for any fixed uncountable compact action space, the saturation property is necessary and sufficient for the existence of pure strategy equilibria in general large games with such an action space. Theorem \ref{thm-lg} below extends this result to the case of nowhere equivalence.

The rest of the paper is organized as follows. Some basic definitions are given in Section~\ref{sec-defn}. Results on distribution and regular conditional distribution of correspondences are presented in Sections~\ref{sec-defi} and \ref{sec-RCD}, respectively. An application to large games is provided in Section~\ref{sec-lg}. Section~\ref{sec-proof} contains the proofs of Theorems~\ref{thm-distribution} and \ref{thm-rcd}. The proof of Theorem~\ref{thm-lg} is left in Section~\ref{sec-game proof}.

\section{Basics}\label{sec-defn}

Throughout this paper, the triple $(T,\cT,\lambda)$ denotes an atomless probability space with a complete countably additive probability measure $\lambda$. A correspondence $F$ from $T$  to a topological space $X$  is a mapping from $T$ to the set of nonempty subsets of $X$; that is, $\mathcal{P}(X) \setminus \emptyset$. A correspondence $F$ is said to be measurable if its graph belongs to the product $\sigma$-algebra $\cT\otimes\mathcal{B}(X)$, where $\cB(X)$ is the Borel $\sigma$-algebra of $X$. A mapping $f:T\to X$ is called a selection of $F$ if $f(t)\in F(t)$ for $\lambda$-almost all $t\in T$. The correspondence $F$ is said to be closed (resp. compact, convex) valued if $F(t)$ is a closed (resp. compact, convex) subset of $X$ for $\lambda$-almost all $t\in T$.

A correspondence $G$ from a Polish space $Y$ (\textit{i.e.}, a complete separable metrizable topological space) to another Polish space $Z$ is said to be upper hemicontinuous at $y_0\in Y$ if for any open set $O_Z$ that contains $G(y_0)$, there exists an open neighborhood $O_Y$ of $y_0$ such that for any $y \in O_Y$, $G(y)\subseteq O_Z$. The correspondence $G$ is said to be upper hemicontinuous if it is upper hemicontinuous at every point $y\in Y$.

Hereafter, the usual Lebesgue unit interval is denoted by $(I,\cI,\eta)$; that is, the unit interval $I=[0,1]$ is endowed with the Lebesgue  $\sigma$-algebra $\cI$ (i.e., the completion of the Borel $\sigma$-algebra) and the Lebesgue measure $\eta$. Let $X$ and $Y$ be  Polish spaces, and $\cM(X)$ (resp. $\cM(Y)$) the space of all Borel probability measures endowed with the topology of weak convergence on $X$ (resp. $Y$). Then $\cM(X)$ and $\cM(Y)$ are again Polish spaces.

Let $\cF$ be a countably generated\footnote{A probability space (or its $\sigma$-algebra) is said to be countably generated if its $\sigma$-algebra is generated by countable measurable subsets together with the null sets.} sub-$\sigma$-algebra of $\cT$. For any nonnegligible set $E\in\cT$, the restricted probability space $(E,\cF^E,\lambda^E)$ on $E$ is defined as follows: $\cF^E$ is the $\sigma$-algebra $\{ E\cap E' \colon E' \in \cF \}$ and $\lambda^E$ is the probability measure re-scaled from the restriction of $\lambda$ to $\cF^E$. The restricted space $(E,\cT^E,\lambda^E)$ can be defined similarly.

The following condition of ``nowhere equivalence'' is in Definition 1 of \cite{HSS2017}.

\begin{defn}\label{defn-nowhere_equiv}
The $\sigma$-algebra $\cT$ is said to be \textbf{nowhere equivalent} to $\cF$ if for every $D\in\cT$ with $\lambda(D)>0$, there exists a $\cT$-measurable subset $D_0$ of $D$ such that $\lambda(D_0\triangle D_1)>0$ for any $D_1\in\cF^D$.
\end{defn}

\section{Distribution of correspondences}\label{sec-defi}

It was shown in \cite{Sun1996} that the regularity properties (convexity, closedness, compactness, and preservation of upper hemicontinuity) for distribution of correspondences hold if the underlying probability space is an atomless Loeb space. In \cite{KS2009}, it is shown that these properties hold if and only if the underlying probability space is saturated. The saturation condition implies that the underlying probability space is sufficiently rich (nowhere countably generated, see Fact~2.5 therein). Since an atomless Loeb space is always saturated, the result in \cite{KS2009} extends that in \cite{Sun1996}.

In various applications, the widely adopted probability spaces are based on Polish spaces (\textit{e.g.}, the Lebesgue unit interval), which are  not saturated. As a result, those regularity properties may fail. In this section, we shall distinguish the measurability of a correspondence and the measurability of its selections by working with the nowhere equivalence condition, and are able to characterize the regularity properties for the distribution of correspondences. Since the $\sigma$-algebra in a saturated probability space is nowhere equivalent to any countably generated sub-$\sigma$-algebra, we thus extend the corresponding characterization results in Section~3 of \cite{KS2009}.

We follow the notation as specified in Section \ref{sec-defn}. Let $(T,\cT,\lambda)$ be an atomless probability space and $\cF$ a countably generated sub-$\sigma$-algebra of $\cT$, and $X$ a Polish space. For a correspondence $F$ from $T$ to  $X$, let
$$D_{F}^{\cT}=\{\lambda f^{-1}\in\cM(X): f\text{ is a }\cT\text{-measurable  selection of }F\},$$
where $\lambda f^{-1}$ is the induced distribution of $f$ on $X$.

Our first theorem characterizes the regularity properties for the distribution of correspondences via the nowhere equivalence condition.

\begin{thm}\label{thm-distribution}
Let $X$ be a fixed uncountable Polish space.\footnote{The necessity result in Theorem 3.7 of \cite{KS2009} is proved by working with the special case that $X$ is an interval on the real line. Here our necessity result is stated in terms of any fixed uncountable Polish space.} The condition that $\cT$ is nowhere equivalent to $\cF$ is necessary and sufficient  for each of the following properties.
\begin{description}
  \item[A1] For any closed valued $\cF$-measurable correspondence $F$ from $T$ to $X$, $D_{F}^{\cT}$ is convex.

  \item[A2] For any closed valued  $\cF$-measurable correspondence $F$ from $T$ to $X$, $D_{F}^{\cT}$ is closed.

  \item[A3] For any compact valued $\cF$-measurable correspondence $F$ from $T$ to $X$, $D_{F}^{\cT}$ is compact.

  \item[A4] For any closed valued correspondence $G$ from $T\times Y$ to $X$ ($Y$ is a Polish space) such that there exists a compact valued $\cF$-measurable correspondence $F$ from $T$ to $X$ and
      \begin{description}
        \item[a] for any $(t,y)\in T\times Y$, $G(t,y)\subseteq F(t)$,
        \item[b] for any $y\in Y$, $G(\text{\ensuremath{\cdot}},y)$ (denoted as $G_y$) is $\cF$-measurable from $T$ to $X$,
        \item[c] for any $t\in T$, $G(t,\cdot)$ (denoted as $G_t$) is upper hemicontinuous from $Y$ to $X$,
      \end{description}
      the correspondence $H(y)=D_{G_{y}}^{\cT}$ is upper hemicontinuous from $Y$ to $\cM(X)$.

  \item[A5] For any $\cF$-measurable mapping $G$ from $T$ to $\mathcal{M}(X)$, there exists a $\cT$-measurable mapping $f$ from $T$ to $X$ such that
      \begin{enumerate}
        \item for every Borel subset $B$ of $X$,
            $$\lambda f^{-1}(B)=\int_{T}G(t)(B)\rmd\lambda(t);$$
        \item $f(t)\in supp\, G(t)$ for $\lambda$-almost all $t\in T$, where \textup{$supp\, G(t)$} is the support of the probability measure $G(t)$ on $X$ (the support of a Borel probability measure on $X$ is the smallest closed set in $X$ with measure one).
      \end{enumerate}

\end{description}
\end{thm}

\section{Regular conditional distribution of correspondences}\label{sec-RCD}

Since we distinguish the measurability of a correspondence and the measurability of its selections, it is natural to consider the set of regular conditional distributions, each of which is induced by a selection of the correspondence conditioned on the smaller $\sigma$-algebra. In this section, we characterize the regularity properties of the regular conditional distribution of correspondences via the nowhere equivalence condition.

As in the previous section, $(T,\cT,\lambda)$ is an atomless probability space and $\cF$ a countably generated sub-$\sigma$-algebra of $\cT$. For a Polish space $X$, let $C_b(X)$ be the space of all bounded continuous functions from $X$ to the space $\bR$ of real numbers. A real-valued function $c$ on $T \times X$ is said to be a Carath\'eodory function if $c(\cdot,x)$ is $\cF$-measurable for each $x\in X$, and $c(t,\cdot)$ is continuous for each $t\in T$.

The following is the standard definition of transition probabilities; see, for example, \cite{Bogachev2007}.

\begin{defn}\label{defn-young measure}
An $\cF$-measurable transition probability from $T$ to $X$ is a mapping $\phi:T\to \cM(X)$ such that $\phi(\cdot,B): t\to \phi(t,B)$ is $\cF$-measurable for every $B\in\cB(X)$.

The set of all $\cF$-measurable transition probabilities from $T$ to $X$ is denoted by $\cR^\cF(X)$ (or $\cR^\cF$ when there is no confusion).
\end{defn}

\begin{defn}\label{defn-convergence rcd}
The weak topology on $\cR^\cF$ is defined as the weakest topology for which the functional $\phi\to  \int_T[\int_X c(t,x) \phi(t,\rmd x)] \rmd\lambda(t)$  is continuous for every  bounded Carath\'eodory function $c$ on $T \times X$.
\end{defn}

Let $f$ be a $\cT$-measurable mapping from $T$ to $X$, and $\mu^{f|\cF}$ the regular conditional distribution (RCD) of $f$ given $\cF$.\footnote{Since $X$ is a Polish space endowed with the Borel $\sigma$-algebra, the RCD $\mu^{f|\cF}$ always exists; see \cite{Durrett2010}.} That is, $\mu^{f|\cF}$ is a mapping from $T\times \cB(X)$ to $[0,1]$ such that
\begin{enumerate}
  \item $\mu^{f|\cF}(t,\cdot)$ is a Borel probability measure on $X$ for each $t\in T$;
  \item given any Borel subset $B$ of $X$, $\mu^{f|\cF}(\cdot, B)$ is the conditional expectation $\mathbb{E} \left[1_B(f)|\cF \right]$ of the indicator function $1_{f^{-1}(B)} =1_B(f)$, given $\cF$.
\end{enumerate}
Let $F$ be a correspondence from $T$ to $X$. We denote
$$\cR^{(\cT,\cF)}_F=\{\mu^{f|\cF}: f \mbox{ is a } \cT\mbox{-measurable selection of } F\},$$
which is the set of all RCDs induced by the $\cT$-measurable selections of $F$ conditioned on $\cF$.

Below, we show that the corresponding results of Theorem \ref{thm-distribution} still hold in the setting of regular conditional distributions. In particular, the sufficiency part of Theorem~\ref{thm-rcd} generalizes the sufficiency part of Theorem~\ref{thm-distribution}, where the distribution of a selection can be viewed as the regular conditional distribution of the selection with respect to the trivial $\sigma$-algebra $\{T, \emptyset\}$.

\begin{thm}\label{thm-rcd} Let $X$ be a fixed uncountable Polish space.
The condition that $\cT$ is nowhere equivalent to $\cF$ is necessary and sufficient  for the validity of each of the following properties under any sub-$\sigma$-algebra $\cG$ of $\cF$.
\begin{description}
  \item[B1] For any closed valued $\cF$-measurable correspondence $F$ from $T$ to $X$, $\cR^{(\cT,\cG)}_F$ is convex.

  \item[B2] For any closed valued $\cF$-measurable correspondence $F$ from $T$ to $X$, $\cR^{(\cT,\cG)}_F$ is weakly closed.

  \item[B3] For any compact valued $\cF$-measurable correspondence $F$ from $T$ to $X$, $\cR^{(\cT,\cG)}_F$ is weakly compact.

  \item[B4] For any closed valued correspondence $G$ from $T\times Z$ to $X$ such that there exists a compact valued correspondence $F$ from $T$ to $X$ and
      \begin{description}
        \item[a] for any $(t,z)\in T\times Z$, $G(t,z)\subseteq F(t)$;
        \item[b] for any $z\in Z$, $G(\text{\ensuremath{\cdot}},z)$ is $\cF$-measurable from $T$ to $X$;
        \item[c] for any $t\in T$, $G(t,\cdot)$ is upper hemicontinuous from $Z$ to $X$;
      \end{description}
      The correspondence $H(z)=\cR_{G_{z}}^{(\cT,\cG)}$ is upper hemicontinuous from $Z$ to $\cR^{\cG}$.

  \item[B5] For any $G\in \cR^\cG$, there exists a $\cT$-measurable mapping $g$ such that $\mu^{g|\cG}=G$.\footnote{The sufficiency part of the property (B5) was proved in Lemma~4.4~(iii) of \cite{HK1984}.  We shall give an alternative proof in the appendix.}
  \end{description}
\end{thm}

\section{Large games}\label{sec-lg}

In this section, we shall study the existence of pure strategy equilibria in large games by adopting the nowhere equivalence condition.

The agent space is modeled by an atomless probability space $(T,\cT,\lambda)$. Let $K$ be a compact metric space that serves as the common action space for all the players, and $\cM(K)$ the space of all Borel probability measures on $K$ endowed with the topology of weak convergence of measures. Denote $\cU$ as the space of bounded continuous real-valued functions on $K\times\cM(K)$, which is the space of all possible payoff functions.

\begin{defn}\label{defn-lg}
A large game is a measurable mapping $G$ from $(T,\cT,\lambda)$ to $\cU$.
A Nash equilibrium in the large game $G$ is a measurable mapping $g$ from $(T,\cT,\lambda)$ to $K$ such that for $\lambda$-almost all $t\in T$,
$$G(t)(g(t),\lambda g^{-1})\ge G(t)(k,\lambda g^{-1})\text{ for all $k \in K$.} $$
\end{defn}

Let $\cF$ be a sub-$\sigma$-algebra of $\cT$, which can be understood as the $\sigma$-algebra generated by the function $G$ that specifies the agents' characteristics. Such a probability space $(T,\cF,\lambda)$ is called the characteristic type space in \cite{HSS2017}.

It was demonstrated in \cite{RSY1995} that a large game may not possess any Nash equilibrium via a rather involved example in which the agent space is the Lebesgue unit interval.\footnote{A countably generated Lebesgue extension, which is nowhere equivalent to the relevant Borel $\sigma$-algebra, is presented in \cite{KZ2012} as the agent space such that the example in \cite{RSY1995} has a Nash equilibrium. That result of \cite{KZ2012} also follows from Theorem~2 of \cite{HSS2017}.} This nonexistence problem was resolved in \cite{KS1999} by working with a hyperfinite Loeb agent space. It was then shown in Theorem 4.6 of \cite{KS2009} that the saturation property is necessary and sufficient  for the existence of pure strategy Nash equilibria in large games with any fixed uncountable compact metric space as the action space. A similar characterization on the existence of pure strategy Nash equilibria was considered in \cite{HSS2017}. While the strategy profiles and agents' characteristics in \cite{KS2009} are measurable with respect to the $\sigma$-algebra from an underlying agent space, say $(T,\cT,\lambda)$, it is assumed in \cite{HSS2017} that  the strategy profiles are measurable with respect to $\cT$, but the agents' characteristics are measurable with respect to a sub-$\sigma$-algebra $\cF$.  It was shown in \cite[Theorem~2]{HSS2017}  that any $\cF$-measurable game has a $\cT$-measurable pure strategy Nash equilibrium if and only if $\cT$ is nowhere equivalent to $\cF$. A sequence of large games that have the same agent space but different action spaces is constructed in \cite{HSS2017} for proving the necessity part. However, the necessity of nowhere equivalence for the existence result on Nash equilibrium is unknown if one restricts the attention to large games with any specific uncountable action space, say, the unit interval.

In the following theorem, we show that the nowhere equivalence condition is indeed necessary and sufficient for the existence of pure strategy Nash equilibria in large games with any fixed uncountable compact metric space as the action space. The sufficiency part of this theorem is a special case of the corresponding result in Theorem 2 of \cite{HSS2017}. The necessity part will be proved in Section \ref{sec-game proof}.

\begin{thm}\label{thm-lg}
Let $K$ be a fixed uncountable compact metric space. Any $\cF$-measurable game with the action space $K$ has a $\cT$-measurable Nash equilibrium if and only if $\cT$ is nowhere equivalent to $\cF$.\footnote{A more general class of large games was considered in \cite{KRSY2013}, where the individual agents have names as well as traits. It was shown in \cite{KRSY2013} that the saturation property is necessary and sufficient for the existence of pure strategy Nash equilibria and the closed graph property; see also \cite{QY2014}. That result for large games with traits was extended in \cite{HSS2017} by adopting the nowhere equivalence condition, where the necessity part was shown via a sequence of large games with the same agent space but different action spaces. It is clear that the necessity part of our Theorem~\ref{thm-lg} can also be regarded as a necessity result for large games with traits and but with any {\it fixed} uncountable compact metric space as the action space (the trait space is a singleton set).}
\end{thm}

\section{Proofs of Theorems~\ref{thm-distribution} and \ref{thm-rcd}}\label{sec-proof}

\subsection{Preliminary lemmas}

In the following, we state several conditions which are equivalent to the condition of nowhere equivalence.

\begin{defn}\label{defn-others}
\begin{enumerate}
\item The $\sigma$-algebra $\cT$ is \textbf{conditional atomless} over $\cF$ if for every $D\in\cT$ with $\lambda(D)>0$, there exists a $\cT$-measurable subset $D_0$ of $D$ such that on some set of positive probability,
$$0<\lambda(D_0\mid\cF)<\lambda(D\mid\cF).$$

\item The $\sigma$-algebra $\cT$ is said to be \textbf{relatively saturated} with respect to $\cF$ if for any Polish spaces $X$ and $Y$, any measure $\mu\in\cM(X\times Y)$, and any $\cF$-measurable mapping $f$ from $T$ to $X$ with $\lambda f^{-1}=\mu_{X}$,\footnote{Hereafter, $\mu_{X}$ denotes the marginal of $\mu$ on the space $X$.} there exists a $\cT$-measurable mapping $g$ from $T$ to $Y$ such that $\mu=\lambda(f,g)^{-1}$.

\item The $\sigma$-algebra $\cF$ admits an \textbf{atomless independent supplement} in $\cT$ if there exists another sub-$\sigma$-algebra $\cH$ of $\cT$ such that $(T,\cH,\lambda)$ is atomless, and for any $C_1\in\cF$ and $C_2\in\cH$, $\lambda(C_1\cap C_2)=\lambda(C_1)\lambda(C_2)$.

\item The $\sigma$-algebra $\cF$ admits an \textbf{asymptotic independent supplement} in $\cT$ if for some strictly increasing sequence $\{n_k\}_{k=1}^\infty$ of positive integers and for each $k \ge 1$, there exists a $\cT$-measurable partition $\{E_1,E_2,\ldots,E_{n_k}\}$ of $T$ with $\lambda(E_j) = \frac{1}{n_k}$ and $E_j$ being independent of $\cF$ for $j=1,2,\ldots, n_k$.
\end{enumerate}
\end{defn}

Condition~(1), which is called ``$\cT$ is atomless over $\cF$'' in Definition~4.3 of \cite{HK1984}, is a generalization of the notion of atomlessness. The concept of relative saturation refines the concept of saturation; see \cite{HK1984} and \cite{KS2009} for the formal definition of saturation. Condition~(3) implies that there exist sufficiently many independent events for the $\sigma$-algebra $\cF$ within the $\sigma$-algebra $\cT$. Condition~(4) is an asymptotic version of Condition~(3), which will be used extensively for deriving the necessity parts of Theorems \ref{thm-distribution} and \ref{thm-lg}.

The following lemma shows that these conditions are indeed equivalent to the condition of nowhere equivalence. The relationship that $(iv) \Rightarrow (ii)$ was proved in Lemma~4.4 of \cite{HK1984}. See Lemmas~2 and 7 in \cite{HSS2017} for the complete proof.

\begin{lem}\label{lem-neq}
The following statements are equivalent.
\begin{enumerate}[(i)]
\item $\cT$ is nowhere equivalent to $\cF$.
\item $\cT$ is conditional atomless over $\cF$.
\item $\cT$ is relatively saturated with respect to $\cF$.
\item $\cF$ admits an atomless independent supplement in $\cT$.
\item $\cF$ admits an asymptotic independent supplement in $\cT$.
\end{enumerate}
\end{lem}

It is shown in \cite[Section~4]{HSS2017} that the condition of nowhere equivalence unifies several conditions that have been imposed on the spaces of economic agents in various applications. The following definition states one more such condition, which was called ``many agents of every type'' in \cite{Noguchi2009} and \cite{Podczeck1997}.

\begin{defn}
\label{defn-disintegration}
Let $(T, \cT, \lambda)$ be a probability space, and $X$ a Polish space. A system of regular conditional probabilities $\{\lambda_x \colon x\in X\}$  is said to be generated by a measurable mapping $G$ from $T$ to $X$ if
\begin{enumerate}
\item for every $x\in X$, $\lambda_x$ is a probability measure on $(T,\cT)$;
\item for every $D\in\cT$, $\lambda(\cdot,D)$ is measurable with respect to $\cB(X)$;
\item $\lambda(G^{-1}(E) \cap D)=\int_E \lambda_x(D) \rmd \kappa$ for each $D\in\cT$ and $E\in \cB(X)$, where $\kappa=\lambda G^{-1}$.
\end{enumerate}

The family of conditional probabilities $\{\lambda_x \colon x\in X\}$ is called proper if $\lambda_x$ is concentrated on $G^{-1}(\{x\})$ for $\kappa$-almost all $x\in X$, and atomless if $\lambda_x$ is atomless for $\kappa$-almost all $x\in X$.
\end{defn}

The following lemma shows that given a measurable mapping $G$, if the family of regular conditional probability generated by $G$ is well behaved (i.e., proper and atomless), then the nowhere equivalence condition is satisfied.

\begin{lem}\label{lem-nowhere_noguchi}
Suppose that $\cT$ is countably generated and the family of regular conditional probability  $\{\lambda_x\colon x\in X\}$ generated by $G$ is proper and atomless. Then the $\sigma$-algebra $\cG$  generated by $G$ admits an atomless independent supplement in $\cT$.\footnote{This lemma is essentially the same as Theorem 10.8.3 of \cite{Bogachev2007}. A stronger conclusion is obtained there under stronger conditions.}
\end{lem}

\begin{proof}
Since $\cT$ is countably generated, based on Theorem~6.5.5 in \cite{Bogachev2007}, there is a Borel measurable mapping $\phi$ from $(T,\cT)$ to $I=[0, 1]$ such that $\phi$ could generate the $\sigma$-algebra $\cT$. Define a mapping $f\colon X\times I \to [0,1]$ by letting $f(x,i)=\lambda_x \phi^{-1}([0,i])$ for any $x \in X$ and $i \in I$. For each $x\in X$, denote $f_x(\cdot)=f(x,\cdot)$, and hence $f_x$ is the distribution function of the probability measure $\lambda_x \phi^{-1}$ on $I$.
For $\kappa$-almost all $x\in X$, the atomlessness of $\lambda_x$ on $\cT$ implies that $\lambda_x \phi^{-1}(\{i\})=0$ for all $i\in I$. Thus the distribution function $f_x(\cdot)$ is continuous on $I$ for each $x\in X$.

Let $g(t)=f(G(t), \phi(t))$ for each $t \in T$. We claim that $g$ is independent of $G$ and $\lambda  g^{-1}$ is the Lebesgue measure $\eta$ on $I$.
Firstly, by condition (3) of Definition \ref{defn-disintegration}, we have $\lambda(G^{-1}(E) \cap g^{-1}([a,b]))=\int_E \lambda_x\big(g^{-1}([a,b]) \big) \rmd \kappa$ for any $E\in \cB(X)$ and $0\le a < b \le 1$.
Secondly, the condition that $\{\lambda_x\colon x\in X\}$ is proper implies that $\lambda_x$ is concentrated on $G^{-1}(\{x\})$, and hence $\lambda_x\big(g^{-1}([a,b]) \big)=\lambda_x\big((f_x\circ \phi)^{-1}([a,b]) \big)$ for $\kappa$-almost all $x\in X$.
Thirdly, for $\kappa$-almost all $x\in X$, $f_x$ is the continuous distribution function of the probability measure $\lambda_x  \phi^{-1}$, it then follows by Example~3.6.2 of \citet[p.~192]{Bogachev2007} that $\lambda_x \phi^{-1} f_x^{-1}=\eta$.
Thus we have $\lambda(G^{-1}(E) \cap g^{-1}([a,b]))=\kappa(E)\eta([a,b])$.

Therefore, $g$ is independent of $G$ and induces an atomless $\sigma$-algebra, which yields the assertion.
\end{proof}

\begin{rmk}
The converse direction of Lemma~\ref{lem-nowhere_noguchi} may not be true. Let $(I,\cI',\eta)$ be a countably generated extension of the Lebesgue unit interval $(I,\cI,\eta)$ such that $\cI$ admits an atomless independent supplement in $\cI'$. Let $(T,\cT, \lambda)=(I, \cI',\eta)$  and $X=I$. Take $G$ as the identity mapping on $I$; \textit{i.e.}, $G(i)=i$ for all $i\in I$. Then $\cG=\cI$, and $\cG$ admits an atomless independent supplement in $\cT$. It is easy to see that $\lambda_i=\delta_i$ (the Dirac measure at point $i$) for $i\in I$ defines  the regular conditional probability generated by $G$. For almost all $i\in I$, $\lambda_i$ is purely atomic.
\end{rmk}

Every Polish space admits a (not necessarily complete) totally bounded metric (see \cite{Billingsley1968}). Let $d$ be a totally bounded metric  on the Polish space $X$. The distance between a point $a\in X$ and a nonempty set $B\subseteq X$ is defined by
$$d(a,B)=\inf_{b\in B}d(a,b).$$
The Hausdorff semidistance between two nonempty sets $A$ and $B$ is $\sigma(A,B) = \sup_{a\in A}d(a,B)$. The Hausdorff distance between two sets $A$ and $B$ is defined as $\rho(A,B)=\max\{\sigma(A,B),\sigma(B,A)\}$.
Let $\cF_X$ be the hyperspace of nonempty closed subsets of $X$ endowed with the topology induced by the metric $\rho$.

The characterization of the measurability of correspondences with compact range in terms of mappings taking values in the hyperspace $\cF_X$ is well known. The following lemma from \cite{Sun1996} considers the case with the range of Polish spaces.
\begin{lem}\label{lem-hyperspace}
\begin{enumerate}
  \item The hyperspace $\cF_X$ with the metric $\rho$ is a Polish space.
  \item If $F$ is a closed valued correspondence from $T$ to $X$, then $F$ is a measurable correspondence if and only if $F$ is a measurable mapping from $T$ to $\cF_X$.
\end{enumerate}
\end{lem}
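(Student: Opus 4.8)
The plan is to realize $\cF_X$ as a well-behaved subspace of a \emph{compact} hyperspace by passing to the completion. Since $d$ is totally bounded, the completion $(\hat X,\hat d)$ of $(X,d)$ is a compact metric space, and the space $\cK(\hat X)$ of nonempty closed subsets of $\hat X$ with its Hausdorff metric $\hat\rho$ is itself compact (the classical Hausdorff--Blaschke fact for a compact ground space). I would then define $\Phi\colon\cF_X\to\cK(\hat X)$ by $A\mapsto\overline A$, the closure of $A$ in $\hat X$. Because the distance to a set is unchanged under closure and a supremum over a set agrees with the supremum over its closure, $\Phi$ is an isometry from $(\cF_X,\rho)$ onto its image; it is injective since $A=\overline A\cap X$ whenever $A$ is closed in $X$. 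Concretely the image is $\Phi(\cF_X)=\{K\in\cK(\hat X)\colon \overline{K\cap X}=K\}$, the compacta in which $X$ is dense. Separability of $\cF_X$ is then immediate from the separability of the compact space $\cK(\hat X)$.

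The substantive step is to prove complete metrizability. By Alexandrov's theorem it suffices to show that $\Phi(\cF_X)$ is a $G_\delta$ subset of the compact (hence Polish) space $\cK(\hat X)$. Here the essential input is that $X$, being Polish, is a $G_\delta$ subset of its completion, so $\hat X\setminus X=\bigcup_m C_m$ is an increasing union of closed sets. I would write the defining condition ``$K\cap X$ is dense in $K$'' as $\bigcap_n E_n$, where $E_n$ consists of those $K$ in which every point is within $1/n$ of some point of $K\cap X$, so that $E_n^{c}=\{K\colon\exists\,x\in K,\ d(x,K\cap X)\ge 1/n\}$. Exploiting compactness of $K$, the requirement that $\overline B(x,r)\cap K\subseteq\hat X\setminus X$ for radii $r<1/n$ can hold only if $\overline B(x,r)\cap K\subseteq C_M$ for some finite $M$, which lets me present $E_n^{c}$ as a countable union of projections of compact subsets of $\cK(\hat X)\times\hat X$, hence as an $F_\sigma$ set. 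Carrying out this compactness/closedness bookkeeping carefully is the main obstacle in Part 1.

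\textbf{Part 2.} I would prove the equivalence of the two measurability notions by routing both through the real-valued distance functions $t\mapsto d(x,F(t))$. First I claim that graph measurability is equivalent to measurability of $t\mapsto d(x,F(t))$ for every $x$, equivalently for $x$ in a fixed countable dense set $\{x_n\}\subseteq X$. In one direction, $\{t\colon d(x,F(t))<r\}$ is the projection onto $T$ of $\mathrm{Gr}(F)\cap\bigl(T\times B(x,r)\bigr)$, which lies in $\cT$ by the measurable projection theorem; this is where completeness of $\lambda$ enters and is the one genuinely measure-theoretic ingredient. Conversely, if each $t\mapsto d(x_n,F(t))$ is measurable, then $t\mapsto d(y,F(t))$ is measurable for every $y$ (a pointwise limit along $x_n\to y$, using the $1$-Lipschitz dependence) and continuous in $y$; such a Carath\'eodory function is jointly $\cT\otimes\cB(X)$-measurable, whence $\mathrm{Gr}(F)=\{(t,y)\colon d(y,F(t))=0\}\in\cT\otimes\cB(X)$.

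Second, I would show that $F\colon T\to(\cF_X,\rho)$ is Borel measurable iff each $t\mapsto d(x_n,F(t))$ is measurable. The maps $A\mapsto d(x_n,A)$ are $1$-Lipschitz, hence continuous on $(\cF_X,\rho)$, which gives the easy direction. For the converse I would invoke Part 1: since $\cF_X$ is Polish, the continuous injection $A\mapsto\bigl(d(x_n,A)\bigr)_n$ into $\bR^{\bN}$ is a Borel isomorphism onto its image by the Lusin--Souslin theorem, so the coordinate functionals generate the Borel $\sigma$-algebra of $(\cF_X,\rho)$; measurability of all $t\mapsto d(x_n,F(t))$ therefore forces Borel measurability of $F$. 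Combining the two equivalences yields the statement. The delicate points to watch are the projection theorem (and hence the completeness of $\lambda$) and the passage from the countable dense family $\{x_n\}$ to all $x$, both of which I have isolated above.
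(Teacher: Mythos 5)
The paper gives no proof of this lemma at all --- it is quoted from \cite{Sun1996} --- so your proposal has to stand on its own, and Part 1 contains a genuine gap, located exactly where the difficulty of the lemma lives. Your reduction is the right one: the closure map $A\mapsto\overline A$ is an isometry of $(\cF_X,\rho)$ onto $\cD=\{K\in\cK(\hat X):\overline{K\cap X}=K\}$, separability follows, and by Alexandrov's theorem everything rests on showing that $\cD$ is $G_\delta$ in the compact space $\cK(\hat X)$. But the step ``exploiting compactness of $K$, the requirement $\overline B(x,r)\cap K\subseteq\hat X\setminus X$ can hold only if $\overline B(x,r)\cap K\subseteq C_M$ for some finite $M$'' is false: a compact subset of an increasing union of \emph{closed} sets need not lie in any single one of them; that principle is valid only for open covers. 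Concretely, take $\hat X=[0,1]$, $X$ the irrationals, $C_m=\{q_1,\dots,q_m\}$ an enumeration of the rationals in $[0,1]$, and $K=\{0\}\cup\{1/k:k\in\bN\}$; then $\overline B(0,r)\cap K$ is an infinite compact set of rationals, contained in $\bigcup_m C_m$ but in no finite $C_M$. Consequently your presentation of $E_n^{c}$ as an $F_\sigma$ set collapses; the condition ``some relatively open piece of $K$ misses $X$ entirely'' cannot be decomposed into countably many closed conditions in this way when $X$ is merely $G_\delta$ in $\hat X$ (it is not even clear that your sets $E_n$ are Borel --- the related set $\{K:K\subseteq\mathbb{Q}\}$ is a standard example of a non-Borel, coanalytic-complete subset of $\cK([0,1])$, which shows how delicate such conditions are).

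The statement you are reducing to is true --- $\cD$ is indeed $G_\delta$, equivalently $(\cF_X,\rho)$ is Polish --- but it needs a genuinely different argument, and this is the real content of Part 1. One standard route: for a totally bounded $d$, the Hausdorff-metric topology on closed sets coincides with the Wijsman topology (pointwise versus uniform convergence of the equicontinuous distance functionals $A\mapsto d(x,A)$), and the Wijsman hyperspace of a Polish space is Polish by the Beer--Costantini theorem; alternatively one can run a careful scheme/strong-Choquet construction, but nothing as soft as your bookkeeping will do. Part 2 of your proposal, by contrast, is essentially correct: graph measurability is equivalent to measurability of the distance functions $t\mapsto d(x_n,F(t))$ (the projection-theorem direction correctly uses the completeness of $\lambda$, which the paper assumes, and the Carath\'eodory argument gives the converse), and these in turn are equivalent to Borel measurability into $(\cF_X,\rho)$. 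For that last equivalence you can even avoid Lusin--Souslin (and hence any reliance on Part 1) by using the identity $\rho(A,B)=\sup_n|d(x_n,A)-d(x_n,B)|$ for a countable dense set $\{x_n\}$, which shows directly that the distance functionals generate the Borel $\sigma$-algebra of $(\cF_X,\rho)$.
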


\subsection{Proof of the sufficiency part of Theorem~\ref{thm-distribution}}\label{subsec-proof S1}

Suppose that $(T,\cF,\lambda)$ is not atomless. Then $T$ can be written as the union of disjoint $\cF$-measurable sets $T_1$ and $T_2$ so that the restricted $\sigma$-algebras $\cF^{T_1}$ and $\cF^{T_2}$ are respectively atomless and purely atomic under the measure $\lambda$. On $T_2$, we can find a countably generated atomless $\sigma$-algebra $\cF^2$ such that $\cF^2\subseteq \cT^{T_2}$ and $\cT^{T_2}$ is nowhere equivalent to $\cF^2$; see Lemma 1 in \cite{HSS2017}. Let $\cF'$ be the $\sigma$-algebra generated by the measurable sets in $\cF^{T_1}$ and $\cF^2$. Then $\cF\subseteq\cF'$, $\cF'$ is atomless and countably generated, and $\cT$ is nowhere equivalent to $\cF'$. Any $\cF$-measurable correspondence must also be measurable with respect to $\cF'$. Therefore, we only need to prove the case that $(T,\cF,\lambda)$ is an atomless probability space.

Let $X$ be any Polish space, and $\Gamma=\{(x,E)\in X\times\mathcal{F}_{X}:x\in E\}$. It is clear that $\Gamma$ is a closed set in $X\times\mathcal{F}_{X}$. Then $f$ is a $\cT$-measurable selection of a closed valued $\cT$-measurable correspondence $F$ if and only if $\lambda(f,F)^{-1}(\Gamma)=1$.

\

A1. For any $\mu_{1}, \mu_{2}\in D_{F}^{\cT}$ and $0\leq\alpha\leq1$, let $\mu=\alpha\mu_{1}+(1-\alpha)\mu_{2}$. There are two $\cT$-measurable selections $f_1$ and $f_2$ of $F$ such that $\mu_i=\lambda f_i^{-1}$ for $i=1,2$.
Denote $\tau_i=\lambda(f_i,F)^{-1}$ for $i=1,2$ and $\tau=\alpha\tau_{1}+(1-\alpha)\tau_{2}$. Then $\tau_X=\alpha\mu_{1}+(1-\alpha)\mu_{2}=\mu$ and $\tau_{\cF_X}=\lambda F^{-1}$.

By Lemma~\ref{lem-neq}, $\cT$ is relatively saturated with respect to $\cF$. Thus, there exists a $\cT$-measurable function $f$ such that $\lambda(f,F)^{-1}=\tau$. Since $\tau_i(\Gamma)=1$ for $i=1,2$, we have $\tau(\Gamma)=1$, and $f$ is a $\cT$-measurable selection of $F$. As a result,
$$\alpha\mu_{1}+(1-\alpha)\mu_{2} = \tau_X = \lambda f^{-1} \in D_{F}^{\cT},$$
which implies that $D_{F}^{\cT}$ is convex.

\

A2.To prove the closedness of  $D_{F}^{\cT}$, we claim that $D_{F}^{\cT}=\overline{D_{F}^{\cF}}$, where
$$D_{F}^{\cF}=\{\lambda f^{-1}\in\cM(X): f\text{ is an }\cF\text{-measurable  selection of }F\},$$ and $\overline{D_{F}^{\cF}}$ denotes the closure of  $D_{F}^{\cF}$.

First, we show the direction $D_{F}^{\cT}\subseteq\overline{D_{F}^{\cF}}$. For any $\mu\in D_{F}^{\cT}$, there exists a $\cT$-measurable selection $f$ of $F$ such that $\mu=\lambda f^{-1}$.  For any open set $O\subseteq X$,
$$\mu(O)=\lambda(t:f(t)\in O)\leq \lambda(t:F(t)\cap O\neq\emptyset)=\lambda(F^{-1}(O)).$$
By Proposition~3.5 in \cite{KS2009}, the Borel probability measure $\mu$ belongs to the closure of $D_{F}^{\cF}$ if and only if
\begin{equation}
\mu(O)\leq \lambda(F^{-1}(O))\text{ for any open set O in X}.\label{eq:Borel condition}
\end{equation}
As a result, $\mu\in\overline{D_{F}^{\cF}}$, which implies that $D_{F}^{\cT}\subseteq\overline{D_{F}^{\cF}}$.

Conversely, for any $\mu\in\overline{D_{F}^{\cF}}$, there exists a sequence $\{\mu_{n}\}_{n=1}^{\infty}\subseteq D_{F}^{\cF}$ such that $\mu_{n}\rightarrow\mu$ weakly. For each $n \ge 1$, let $f_n$ be an  $\cF$-measurable selection of $F$ such that $\mu_{n}=\lambda f_{n}^{-1}$.
Since the family $\{\mu_1,\cdots,\mu_n,\cdots\}$ is tight, the sequence $\{\lambda(f_{n},F)^{-1}\}_{n=1}^{\infty}$ is also tight.
Therefore, it has a subsequence, say $\{\lambda(f_{n},F)^{-1}\}_{n=1}^{\infty}$ itself, which converges weakly to some measure $\tau$ such that $\tau_X=\mu$.
Thus,
$$1\geq \tau(\Gamma)\geq \limsup\; \lambda(f_{n},F)^{-1}(\Gamma)=1.$$
Because of the relative saturation property,  there exists a $\cT$-measurable mapping $f$ such that $\lambda(f,F)^{-1}=\tau$ and $\lambda f^{-1}=\mu$, which implies that  $f$ is a $\cT$-measurable selection of $F$. Therefore, $\mu=\lambda f^{-1}\in D_{F}^{\cT}$ and $\overline{D_{F}^{\cF}}\subseteq D_{F}^{\cT}$.

As a result, $D_{F}^{\cT}=\overline{D_{F}^{\cF}}$, $D_{F}^{\cT}$ is closed.

\

A3. Since $F$ is compact valued, Proposition 3.8 of \cite{Sun1996} implies that $F$ is tight in the sense that for every $\epsilon > 0$, there is a compact set $K_\epsilon$ in $X$ such that the set $\{t \in T: F(t) \subseteq K_\epsilon \}$ is $\cF$-measurable and its measure is greater
than $1- \epsilon$. Hence, $D_{F}^{\cF}$ is relatively compact. By the proof of (A2), $D_{F}^{\cT}=\overline{D_{F}^{\cF}}$. Thus, $D_{F}^{\cT}$ is closed and relatively compact, and hence compact.

\

A4. Since $F$ is compact valued and $G(\cdot,y)$ is a closed subset of $F(\cdot)$ for every $y\in Y$, $G(\cdot, y)$ is also compact valued. By  (A3), $D_{F}^{\cT}$ is compact and $H(\cdot)$ is compact valued. As a result, $H$ is a compact valued correspondence from $Y$ to the compact set $D_{F}^{\cT}$. To prove that $H$ is upper hemicontinuous, it suffices to show that if $y_{n}\rightarrow y$ in $Y$ (with $y_n \ne y$ for any $n \ge 1$), $\mu_{n}\in H(y_{n})=D_{G_{y_{n}}}^{\cT}$ and $\mu_{n} \rightarrow \mu$ weakly, then $\mu\in D_{G_{y}}^{\cT}=H(y)$; see Theorem~17.11 in \cite{AB2006}.

For a fixed $n\ge 1$, since $\mu_{n}\in D_{G_{y_{n}}}^{\cT}=\overline{D_{G_{y_{n}}}^{\cF}}$, there exists a sequence $\{\mu_{n}^{m}\}_{m=1}^{\infty}$ in  $D_{G_{y_{n}}}^{\cF}$ such that $\mu_{n}^{m}$ converges weakly to $\mu_{n}$ as $m \rightarrow \infty$. For each $n\ge 1$, there exists some $m_{n}$ such that $\overline{d}(\mu_{n}^{m_{n}},\mu_{n})<\frac{1}{n}$, where $\overline{d}$ is the corresponding Prokhorov metric. Fix this $m_n$. Let $g_{n}$ be an $\cF$-measurable selection of $G_{y_{n}}$ such that $\mu^{m_n}_{n}=\lambda g_{n}^{-1}$.

Let $J(t)=\overline{\left\{(g_n(t),y_n) \right\}_{n =1}^\infty }$, where $\bN = \{1, 2, \ldots\}$ is the set of natural numbers. Then $J$ is a compact valued $\cF$-measurable correspondence from $T$ to $X\times Y$, and $D_J^{\cT}$ is compact by (A3). Since $\lambda(g_n,y_n)^{-1}$  converges weakly to $\mu\otimes \delta_y \in D_J^{\cT}$ ($\delta_y $ is the Dirac measure at $y$),
$J$ has a $\cT$-measurable selection $(g,y)$ of $J$ such that $\lambda(g,y)^{-1}=\mu\otimes\delta_y$.
Since $G_t(\cdot)$ is upper hemicontinuous for all $t\in T$, $g$ is a $\cT$-measurable selection of $G_y$ and  $\mu\in D^{\cT}_{G_y}$. Thus, $H$ is upper hemicontinuous.

\

A5. Since $G$ is $\cF$-measurable, the function $t\rightarrow G(t)(B)$ is $\cF$-measurable for every Borel subset $B \subseteq X$. Let $F$ be the correspondence from $T\rightarrow X$ such that $F(t)=supp\: G(t)$. Then $F$ is a closed valued $\cF$-measurable correspondence and $G(t)(F(t))=1$ for $\lambda$-almost all $t \in T$.

For every open set $O$ in $X$, we have
$$F^{-1}(O)=\{t:F(t)\cap O\neq\emptyset\}=\{t:G(t)(O)>0\}.$$
As a result, $F^{-1}(O)$ is $\cF$-measurable. Define
$$\mu(O)=\int_{T}G(t)(O)\rmd\lambda(t)$$
for every open set $O$ in $X$. Then
\begin{align*}
\mu(O)
& =\int_{T}G(t)(O)\rmd\lambda(t)=\int_{\{t \colon G(t)(O)>0\}} G(t)(O)\rmd\lambda(t) \\
& \leq \lambda( \{t \colon G(t)(O)>0\} )=\lambda(F^{-1}(O)).
\end{align*}
By Proposition~3.5 in \cite{KS2009}, $\mu$ belongs to the closure of $D_{F}^{\cF}$. By the proof of (A2), $\mu\in D_{F}^{\cT}$. Thus, there exists a $\cT$-measurable selection $f$ of $F$ such that $\lambda f^{-1}=\mu$.
This completes the proof.

\subsection{Proof of the necessity part of Theorem~\ref{thm-distribution}}

In the necessity part of Theorem~\ref{thm-distribution}, we work with a fixed uncountable Polish space $X$.
If $(T,\cF,\lambda)$ is purely atomic, the necessity part of Theorem~\ref{thm-distribution} holds automatically. Suppose that $T$ can be partitioned into two disjoint $\cF$-measurable parts $T_1$ and $T_2$ such that $\cF^{T_1}$ is atomless and  $\cF^{T_2}$ is purely atomic, $T=T_1\cup T_2$, and $\lambda(T_1)=1 - \gamma$ for some $\gamma \in [0, 1)$. When $(T,\cF,\lambda)$ is atomless, we have $T_2 = \emptyset$ and $\gamma =0$.

Recall that $(I,\cI,\eta)$ denotes the Lebesgue unit interval. Let $\cI_1$ be the restriction of $\cI$ on $(\gamma,1]$, and $\eta_1$ the Lebesgue measure on $(\gamma,1]$. Since $(T_1,\cF^{T_1},\lambda)$ is atomless and $\cF^{T_1}$ is countably generated, by Lemma 5 of \cite{HSS2017}, there exists a measure preserving mapping $\phi:(T_1,\cF^{T_1},\lambda)\rightarrow((\gamma,1],\cI_1,\eta_1)$ such that for any $E\in \cF^{T_1}$, there exists a set $E'\in\cI_1$ with $\lambda(E \triangle \phi^{-1}(E'))=0$. We extend the domain of the mapping $\phi$ to $T$ by letting $\phi(t)=\frac{\gamma}{2}$  for any $t\in T_2$. Then $\phi$ is an $\cF$-measurable mapping from $T$ to $[0,1]$.

\

A$1'$.\footnote{We put an additional prime for the corresponding labels in the proof of the necessity part.} For any $n\geq 1$, let $A=[-n,n]$,
$$F(t)=\{\phi(t)+i:0\leq i\leq n-1\}\cup \{-\phi(t)-i:0\leq i\leq n-1\},$$
and
$$\mu^{+}_{i}=\lambda(\phi+i)^{-1}, \qquad \mu^{-}_{i}=\lambda(-\phi-i)^{-1}.$$
Then for $0\leq i\leq n-1$, the support of $\mu^{+}_{i}$ concentrates on a subset of $(i,i+1]$ and vanishes outside,  and the support of $\mu^{-}_{i}$ concentrates on a subset of $[-i-1,-i)$ and vanishes outside. Since $\phi$ is $\cF$-measurable, $F$ is closed valued and $\cF$-measurable.

Let $\varphi$ be a Borel measurable bijection from $A$ to $X$ (see Theorem~2.12 in \cite{Parthasarathy1967}), and $G = \varphi \circ F$. Then $G$ is an $\cF$-measurable finite valued correspondence from $T$ to $X$. By the condition of (A$1'$), $D_{G}^{\cT}$ is convex. We claim that $D_{F}^{\cT}$ is also convex. Fix $0 < \alpha < 1$, and two $\cT$-measurable selections $f_1$ and $f_2$ of $F$. Let $g_i = \varphi \circ f_i$. Then $g_i$ is a $\cT$-measurable selection of $G$ for each $i$. Since $D_{G}^{\cT}$ is convex, there exists a $\cT$-measurable selection $g$ of $G$ such that $\lambda g^{-1} = \alpha \lambda g_1^{-1} + (1 - \alpha) \lambda g_2^{-1}$. Since $\varphi$ is a Borel isomorphism from $A$ to $X$, there exists a $\cT$-measurable selection $f$ of $F$ such that $g = \varphi \circ f$, which implies that $\lambda f^{-1} = \alpha \lambda  f_1^{-1} + (1 - \alpha) \lambda f_2^{-1}$. As a result, $D_{F}^{\cT}$ is convex.

As $\mu_i^{+}, \mu_i^{-} \in D_{F}^{\cT}$ for each $i$,
$$\mu=\frac{1}{2n}\left(\sum_{0\leq i\leq n-1}\mu_i^{+}+ \sum_{0\leq i\leq n-1}\mu_i^{-}\right)  \in D_{F}^{\cT}.$$
There exists a $\cT$-measurable selection $f$ of $F$ and $2n$ $\cT$-measurable disjoint sets $E^{+}_{i},E^{-}_{i}$ for $0\leq i\leq n-1$ such that
\begin{enumerate}
\item $\mu=\lambda f^{-1}$;
\item $\lambda\left(\cup_{1\leq i\leq n-1} (E_{i}^{+}\cup E_{i}^{-})\right)=1$; and
\item $$f(t)=\begin{cases}
\phi(t)+i, & t\in E_{i}^{+},\\
-\phi(t)-i, & t\in E_{i}^{-}.
\end{cases}
$$
\end{enumerate}

Let $E^{1+}_i=E^{+}_i\cap T_1$ and  $E^{1-}_i=E^{-}_i \cap T_1$ for $0\leq i \leq n-1$. For any set $E\in\cF^{T_1}$, there exists some set $E'\in\cI_1$ such that $\lambda(E\triangle\phi^{-1}(E'))=0$. Notice that on $\left(\cup_{0\leq i\leq n-1}(\gamma+i,1+i]\right)\cup \left(\cup_{0\leq i\leq n-1}[-1-i, -\gamma-i)\right)$,  $\mu$ is uniformly distributed with the density $\frac{1}{2n}$ with respect to the Lebesgue measure on the union of the $2n$ disjoint intervals. Since
\begin{align*}
\lambda(E^{1+}_0\cap E)
& = \lambda(E^{1+}_0\cap\phi^{-1}(E'))=\lambda(f^{-1}(E'))=\mu(E') \\
& =\frac{1}{2n}\mu_0^{+}(E')=\frac{1}{2n}\lambda(\phi^{-1}(E'))=\frac{1}{2n}\lambda(E),
\end{align*}
we have
$$\lambda(E_{0}^{1+})=\lambda(E_{0}^{1+}\cap T_1)=\frac{1}{2n}\lambda(T_1)=\frac{1-\gamma}{2n}.$$
Therefore, $E_{0}^{1+}$ is independent of $\cF^{T_1}$ under the probability  measure $\lambda^{T_1}$ and of measure $\frac{1-\gamma}{2n}$ under $\lambda$. Similarly, we could prove that $E_{i}^{1+}$ and $E_{i}^{1-}$ have the same property for all $0\leq i\leq n-1$. That is, $\cF^{T_1}$ admits an asymptotic independent supplement in $\cT^{T_1}$ under $\lambda^{T_1}$. By Lemma~\ref{lem-neq}, $\cT^{T_1}$ is nowhere equivalent to $\cF^{T_1}$ under $\lambda^{T_1}$. Since $\cF$ is atomic on $T_2$, $\cT$ is nowhere equivalent to $\cF$ under $\lambda$.

\

A$2'$. Fix $n \ge 1$. Consider the correspondence $F$ constructed in the proof of (A$1'$). Let $\Pi = \{D_j\}_{0 \le j \le 2n - 1}$ be a $\cT$-measurable partition of $T_2$ such that $\lambda(D_j) = \frac{\gamma}{2n} $ for $0 \le j \le 2n-1$. Define a sequence of functions $\{f_k\}$ as follows: for each $k \ge 1$,
$$f_k(t) =
\begin{cases}
\phi(t)+i, & t \in D_i, 0 \le i \le n-1; \\
-\phi(t)-i, & t \in D_{i+n}, 0 \le i \le n-1; \\
\phi(t)+i, & \phi(t) \in (\gamma + \frac{j}{2nk}(1 - \gamma), \gamma + \frac{j + 1}{2nk}(1 - \gamma)] \\
 & \mbox{ for some } j = 2nk' + i, 0 \le k' < k, 0 \le i \le n-1; \\
-\phi(t)-i, & \phi(t) \in (\gamma + \frac{j}{2nk}(1 - \gamma), \gamma + \frac{j + 1}{2nk}(1 - \gamma)] \\
& \mbox{ for some } j = 2nk' + n+i, 0 \le k' < k, 0 \le i \le n-1.
\end{cases}
$$
Then $f_k$ is a $\cT$-measurable selection of $F$. It can be checked that the sequence $\{\lambda  f_k^{-1}\}_{k=1}^\infty$ converges weakly to the Borel probability measure $\mu$ as defined in the proof of (A$1'$).

Since $X$ is an uncountable Polish space, as in \cite[Section~6]{RSY1995}, $X$ contains a compact subset $K$ that is homeomorphic to the Cantor set $C$; see \cite[p.~11]{Parthasarathy1967}. In addition, there is a continuous onto mapping from $C$ to $A$ (see \cite[p.~127]{HY1961}). Let $\beta_2$ be a continuous onto mapping from $K$ to $A$. By the Borel cross section theorem (see \cite[Theorem~4.2]{Parthasarathy1967}), there is a Borel measurable mapping $\beta_1$ from $A$ to $K$ such that $\beta_2(\beta_1(a))$ = $a$ for all $a\in A$.

Let $G(t)=\beta_1\circ F(t)$, and $g_k = \beta_1 \circ f_k$ for each $k \ge 1$. Then $G$ is an $\cF$-measurable correspondence and $g_k$ is a $\cT$-measurable selection of $G$. Since $F(t)$ only contains finitely many elements for every $t \in T$, $G(t)$ is finite, and thus a closed set. By the condition of (A$2'$), $D_G^{\cT}$ is weakly closed. By the compactness of $K$, the sequence $\{\lambda g_k^{-1}\}_{k=1}^\infty$ has a weakly convergence subsequence (say itself) with a limt $\tau \in \cM(K)$.
Since $D_G^{\cT}$ is closed, there exists some $\cT$-measurable selection $g$ of $G$ such that $\lambda g^{-1} = \tau$. Let $f = \beta_2 \circ g$. Then $\lambda f^{-1} = \lambda g^{-1} \beta_2^{-1} = \tau \beta_2^{-1}$. As $f(t) = \beta_2 (g(t)) \in \beta_2 ( G(t)) = \beta_2 (\beta_1 (F(t))) = F(t)$ for each $t \in T$, $f$ is a $\cT$-measurable selection of $F$. Since $f_k = \beta_2 \circ g_k$, $\lambda f_k^{-1} = \lambda g_k^{-1} \beta_2^{-1}$. Since  $\beta_2$ is continuous and the sequence $\{\lambda g_k^{-1}\}_{k=1}^\infty$ converges weakly to $\tau$, we know that  $\{\lambda f_k^{-1}\}_{k=1}^\infty$ converges weakly to $\tau \beta_2^{-1}$. The fact that both $\mu$ and $\tau \beta_2^{-1}$ are the weak limit of the sequence $\{\lambda f_k^{-1}\}_{k=1}^\infty$ implies  that $\mu = \tau \beta_2^{-1} = \lambda   f^{-1}$.

Following the proof of (A$1'$),  $\cT^{T_1}$ is nowhere equivalent to $\cF^{T_1}$ under $\lambda^{T_1}$. Since the restriction $\cF$ to $T_2$ is purely atomic, $\cT$ is nowhere equivalent to $\cF$ under $\lambda$.

\

A$3'$. Notice that the correspondences $F$ and $G$ in the proof of (A$2'$) are both finite valued, and hence are compact valued. In addition, if $D_{F}^{\cT}$/$D_{G}^{\cT}$ is compact, then it is closed. As a result, we indeed have proved this claim in the proof of (A$2'$).

\

A$4'$. Fix $n\geq 1$. Let $A=[-n,n]$.
Define a sequence of correspondences $\{G_k\}_{k\in\bN}$ as follows.
\begin{enumerate}
\item If $\phi(t)\in[0,\gamma)$, then
$$G_k(t)= \{\phi(t)+i:0\leq i\leq n-1\}\cup \{-\phi(t)-i:0\leq i\leq n-1\};$$

\item if
$$\phi(t)\in \left( \gamma+(1-\gamma)\left(\frac{k'}{2k}+\frac{i}{2nk} \right), \gamma+(1-\gamma) \left(\frac{k'}{2k}+\frac{i+1}{2nk} \right)\right]$$
for some $0\leq i \leq n-1$, $0\leq k' \leq 2k-1$ with $k'$ being even, then
\begin{align*}
G_k(t) =
& \bigg\{ 2n\left[ \phi(t)-\gamma-(1-\gamma)\left( \frac{k'}{2k}+\frac{i}{2nk} \right) \right]+\gamma+(1-\gamma)\frac{k'}{2k}+i, \\
& -2n \left[ \phi(t)-\gamma-(1-\gamma)\left(\frac{k'}{2k}+\frac{i}{2nk} \right) \right]-\gamma-(1-\gamma)\frac{k'}{2k}-i \bigg\};
\end{align*}

\item if
$$\phi(t)\in \left( \gamma+(1-\gamma)\left(\frac{k'}{2k}+\frac{i}{2nk} \right), \gamma+(1-\gamma)\left(\frac{k'}{2k}+\frac{i+1}{2nk} \right) \right]$$
for some $0\leq i \leq n-1$, $0\leq k' \le 2k-1$ with $k'$ being odd, then
\begin{align*}
G_k(t) =
& \bigg\{ 2n \left[ \phi(t)-\gamma-(1-\gamma)\left(\frac{k'}{2k}+\frac{i}{2nk} \right) \right]+\gamma+(1-\gamma)\frac{k'-1}{2k}+i, \\
& -2n\left[ \phi(t)-\gamma-(1-\gamma)\left(\frac{k'}{2k}+\frac{i}{2nk} \right) \right]-\gamma-(1-\gamma)\frac{k'-1}{2k}-i \bigg\}.
\end{align*}
\end{enumerate}
Let
$$G_0(t) = \left\{ \phi(t)+i \colon 0\leq i\leq n-1 \right\} \cup \left\{-\phi(t)-i \colon 0\leq i\leq n-1 \right\} .$$
Then $G_k$ is $\cF$-measurable for each $k \ge 0$.

Let $\Pi = \{D_j\}_{0 \le j \le 2n - 1}$ be a $\cT$-measurable partition of $T_2$ such that $\lambda(D_j) = \frac{\gamma}{2n} $ for $0 \le j \le 2n-1$.  For $k \ge 1$, let $g_k(t)$ be defined as follows.
\begin{enumerate}
\item If $t \in D_i$ for $0 \le i \le n-1$, then $g_k(t) = \phi(t)+i$;

\item If $t \in D_{i+n}$ for $0 \le i \le n-1$, then $g_k(t) = -\phi(t)-i$;

\item if
$$\phi(t)\in \left( \gamma+(1-\gamma)\left(\frac{k'}{2k}+\frac{i}{2nk} \right), \gamma+(1-\gamma)\left(\frac{k'}{2k}+\frac{i+1}{2nk} \right) \right]$$
for some $0\leq i \leq n-1$, $0\leq k' \leq 2k-1$ with $k'$ being even, then
$$g_k(t) = 2n \left[ \phi(t)-\gamma-(1-\gamma)\left(\frac{k'}{2k}+\frac{i}{2nk} \right) \right]+\gamma+(1-\gamma)\frac{k'}{2k}+i; $$

\item if
$$\phi(t)\in \left( \gamma+(1-\gamma)\left(\frac{k'}{2k}+\frac{i}{2nk} \right), \gamma+(1-\gamma)\left(\frac{k'}{2k}+\frac{i+1}{2nk} \right) \right]$$
for some $0\leq i \leq n-1$, $0\leq k' \leq 2k-1$ with $k'$ being odd, then
$$g_k(t)=-2n \left[ \phi(t)-\gamma-(1-\gamma)\left(\frac{k'}{2k}+\frac{i}{2nk} \right) \right]-\gamma-(1-\gamma)\frac{k'-1}{2k}-i.$$
\end{enumerate}
Then $g_k$ is a $\cT$-measurable selection of $G_k$, and $\mu = \lambda g_k^{-1}$ for any $k$, where $\mu$ is defined in the proof of (A$1'$).

Let $Y=\{0,1,\frac{1}{2},\cdots\}$ be endowed with the usual metric. As in the proof of (A$2'$), we fix a compact subset $K \subseteq X$ that is homeomorphic to the Cantor set $C$, a Borel measurable mapping $\beta_1$ from $A$ to $K$, and a continuous onto mapping $\beta_2$ from $K$ to $A$ such that $\beta_2(\beta_1(a))$ = $a$ for all $a\in A$. Let $\tilde{G}(t, 0) = \beta_2^{-1} \circ G_0(t)$, $\tilde{G}(t,\frac{1}{k}) = \beta_2^{-1} \circ G_k(t)$ and $\tilde{g}_k = \beta_1 \circ g_k$  for each $k \ge 1$. Then the correspondence $\tilde{G} \colon T \times Y \to X$ satisfies the condition of (A$4'$). In addition, $\beta_2(\tilde{g}_k(t)) = \beta_2(\beta_1 (g_k(t))) = g_k(t) \in G_k(t)$, which implies that $\tilde{g}_k$ is a selection of $\tilde{G}(\cdot,\frac{1}{k})$. Denote $\tau = \mu \beta_1^{-1}$. As $\lambda g_k^{-1} = \mu$, $\lambda \tilde{g}_k^{-1} = \mu  \beta_1^{-1} = \tau$. We claim that $\tau \in D_{\tilde{G}_0}^\cT$. If $\tau \notin D_{\tilde{G}_0}^\cT$, then there exists an open neighbourhood $U$ of $D_{\tilde{G}_0}^\cT$ such that $\tau \notin U$. For each $k$, $\tau \in D_{\tilde{G}_{\frac{1}{k}}}^\cT$, which implies that $D_{\tilde{G}_{\frac{1}{k}}}^\cT$ is not included in the open set $U$. This contradicts with the condition of (A$4'$). As a result, $\tau \in D_{\tilde{G}_0}^\cT$. Then there exists some $\cT$-measurable selection $\tilde{g}_0$ of $\tilde{G}(\cdot, 0)$ such that $\lambda \tilde{g}_0^{-1} = \tau$. Let $g_0 = \beta_2 \circ \tilde{g}_0$. As $g_0(t) = \beta_2 (\tilde{g}_0(t)) \in \beta_2 ( \tilde{G}(t, 0)) = \beta_2 (\beta_2^{-1} (G_0(t))) = G_0(t)$, $g_0$ is a $\cT$-measurable selection of $G_0$. Then $\lambda g_0^{-1} = \lambda \tilde{g}_0^{-1} \beta_2^{-1} = \tau \beta_2^{-1}  = \mu \beta_1^{-1} \beta_2^{-1} = \mu$. That is, there exists a $\cT$-measurable selection $g_0$ of $G_0$ which induces the measure $\mu$.
Note that $G_0$ is the same as the correspondence $F$ constructed in the proof of (A$1'$). Following the proof of (A$1'$),  $\cT^{T_1}$ is nowhere equivalent to $\cF^{T_1}$ under $\lambda^{T_1}$. Since the restriction $\cF$ to $T_2$ is purely atomic, $\cT$ is nowhere equivalent to $\cF$ under $\lambda$.

\

A$5'$. Fix $n\geq 1$. Let $A=[-n,n]$, and $\varphi$ a Borel measurable bijection from $A$ to $X$. Define
$$G_1(t)=\frac{1}{2n}\left(\sum_{0\leq i\leq n-1}\delta_{\varphi(\phi(t)+i)}+  \sum_{0\leq i\leq n-1}\delta_{\varphi(-\phi(t)-i)} \right)$$
for $t \in T$, where $\delta_x$ is the Dirac measure at $x \in X$. Then $G_1$ is $\cF$-measurable. By the condition of (A$5'$), there exists a $\cT$-measurable mapping $f_1$ from $t$ to $X$ such that
\begin{enumerate}
\item $f_1(t) \in \varphi (F(t)) = \left\{ \varphi(\phi(t)+i):0\leq i\leq n-1 \right\} \cup \left\{ \varphi(-\phi(t)-i):0\leq i\leq n-1 \right\}$, where $F$ is the correspondence constructed in the proof of (A$1'$);
\item for every Borel subset $B_1$ of $X$, $\lambda f_1^{-1}(B_1)=\int_{T}G_1(t)(B_1)\rmd\lambda(t)$.
\end{enumerate}

Define
$$G(t) = \frac{1}{2n}\left(\sum_{0\leq i\leq n-1}\delta_{\phi(t)+i}+  \sum_{0\leq i\leq n-1}\delta_{-\phi(t)-i} \right)$$
for $t \in T$, where $\delta_a$ is the Dirac measure at $a \in A$.
Let $f= \varphi^{-1} \circ f_1$. Then $f$ is a $\cT$-measurable selection of $F$.

For every Borel set $B\subseteq A$,
\begin{align*}
\lambda f^{-1}(B)
& =\lambda f_1^{-1} (\varphi (B)) =\int_{T}G_1(t)(\varphi(B))\rmd\lambda(t) =\int_{T}G(t)(B)\rmd\lambda(t) \\
& =\frac{1}{2n}\sum_{0\leq i\leq n-1}\left(\lambda(\phi+i)^{-1} + \lambda(-\phi-i)^{-1} \right) (B)=\mu(B),
\end{align*}
where $\mu$ is the probability measure constructed in the proof of (A$1'$). Following the proof therein, $\cT$ is nowhere equivalent to $\cF$ under $\lambda$.

\subsection{Proof of Theorem~\ref{thm-rcd}}
As noted earlier, the necessity part of Theorem~\ref{thm-rcd} follows from the necessity part of Theorem~\ref{thm-distribution} by taking $\cG$ as the trivial $\sigma$-algebra. Thus, we shall only prove the sufficiency part of Theorem~\ref{thm-rcd}. As discussed in Subsection~\ref{subsec-proof S1}, we can assume without loss of generality that $(T,\cF,\lambda)$ is an atomless probability space.

Since $\cF$ is countably generated, it is easy to see that the weak topology on $\cR^{\cF}$ is semimetrizable, and hence is first countable (see \cite[Lemma~3.3]{AB2006}). By \cite[Theorem~2.40]{AB2006} (resp. \cite{BH1998}), the following sequential convergence suffices for our aim when considering the closedness (resp. compactness) on $\cR^{\cF}$:  a sequence $\{\phi_n\}$ in $\cR^\cF$ is said to weakly converge to some $\phi\in\cR^\cF$ ($\phi_n\Longrightarrow \phi$) if for every  bounded Carath\'eodory function $c: T\times X\to \bR$,
      $$\lim_{n\to\infty}\int_T \left[\int_X c(t,x) \phi_n(t,\rmd x) \right]\rmd \lambda(t)= \int_T \left[\int_X c(t,x) \phi(t,\rmd x) \right] \rmd\lambda(t).$$

By Theorem~2.1.3 in \cite{Castaing2004}, we have the following lemma.
\begin{lem}\label{lem-rcd weak}
Suppose that $\phi_n,\phi \in \cR^{\cF}$ for $n \ge 1$. Then the sequence $\phi_n\Longrightarrow \phi$ if and only if
      $$
      \lim_{n\to\infty}\int_E \left[\int_X c(x) \phi_n(t,\rmd x) \right]\rmd \lambda(t)= \int_E \left[\int_X c(x) \phi(t,\rmd x) \right] \rmd\lambda(t)
      $$
for every $E\in \cF$ and $c\in C_b(X)$.
\end{lem}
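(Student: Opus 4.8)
The plan is to prove both implications, with the forward (``only if'') direction being immediate and the reverse (``if'') direction carrying all the content. For the ``only if'' direction I would simply observe that if $\phi_n\Longrightarrow\phi$ in the sense of Definition \ref{defn-convergence rcd}, then the asserted convergence follows by specializing to the integrand $c(t,x)=\mathbf{1}_E(t)\,c(x)$: this is a bounded Carath\'eodory function whenever $E\in\cF$ and $c\in C_b(X)$, since fixing $x$ it is an $\cF$-measurable function of $t$, and fixing $t$ it is a continuous multiple of $c$.

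For the ``if'' direction, write $P_n(\rmd t,\rmd x)=\lambda(\rmd t)\,\phi_n(t,\rmd x)$ and $P(\rmd t,\rmd x)=\lambda(\rmd t)\,\phi(t,\rmd x)$ for the associated probability measures on $(T\times X,\cF\otimes\cB(X))$; crucially they all share the first marginal $\lambda$. It suffices to show $\int c\,\rmd P_n\to\int c\,\rmd P$ for every bounded Carath\'eodory $c$ with $|c|\le M$. By linearity the hypothesis already gives this for every integrand in the linear span $\cS$ of $\{\mathbf{1}_E(t)\,h(x):E\in\cF,\ h\in C_b(X)\}$, that is for $s(t,x)=\sum_{j=1}^{k}\mathbf{1}_{E_j}(t)\,h_j(x)$ with disjoint $E_j\in\cF$. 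The whole problem thus reduces to approximating an arbitrary $c$ by such an $s$ well enough that the error terms $\int|c-s|\,\rmd P_n$ are small \emph{uniformly} in $n$.

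The key step is to build this approximation, and here tightness does the work. Taking $E=T$ in the hypothesis shows the $X$-marginals $\nu_n(B)=\int_T\phi_n(t,B)\,\rmd\lambda(t)$ converge weakly to $\nu$ in $\cM(X)$; since $X$ is Polish, Prokhorov's theorem makes $\{\nu_n\}\cup\{\nu\}$ tight, so for each $\delta>0$ there is a compact $K\subseteq X$ with $\nu_n(X\setminus K)<\delta$ for all $n$ and for $\nu$. On the compact metric space $K$ the space $C(K)$ is separable, so the scalarly $\cF$-measurable map $t\mapsto c(t,\cdot)|_K$ is strongly measurable by the Pettis theorem and lies in $L^1(T,\cF,\lambda;C(K))$; approximating it in Bochner norm by an $\cF$-simple function yields $u_1,\dots,u_k\in C(K)$ and disjoint $E_1,\dots,E_k\in\cF$ with $\int_T\sup_{x\in K}|c(t,x)-\sum_j\mathbf{1}_{E_j}(t)u_j(x)|\,\rmd\lambda(t)<\delta$. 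Extending each $u_j$ to $\tilde u_j\in C_b(X)$ with $\|\tilde u_j\|_\infty\le M$ via Tietze produces $s=\sum_j\mathbf{1}_{E_j}\tilde u_j\in\cS$. Splitting $\int|c-s|\,\rmd P_n$ over $T\times K$ and $T\times(X\setminus K)$, using the common marginal $\lambda$ on the first piece and the uniform bound $\nu_n(X\setminus K)<\delta$ with $|c-s|\le 2M$ on the second, gives $\int|c-s|\,\rmd P_n\le\delta+2M\delta$ for every $n$ and for $P$. The three-term estimate $|\int c\,\rmd P_n-\int c\,\rmd P|\le\int|c-s|\,\rmd P_n+|\int s\,\rmd P_n-\int s\,\rmd P|+\int|c-s|\,\rmd P$ then lets $n\to\infty$ (the middle term vanishing because $s\in\cS$) and $\delta\to0$ to conclude.

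I expect the main obstacle to be precisely the non-separability of $C_b(X)$ in the supremum norm, which blocks any naive Bochner approximation of $t\mapsto c(t,\cdot)$ and forces the detour through tightness: one restricts to a compact $K$ on which continuous functions form a separable space, pays the uniformly small price $\nu_n(X\setminus K)<\delta$ supplied by Prokhorov, and only then applies Pettis measurability and the Tietze extension. Alternatively, the statement is exactly the content of Theorem 2.1.3 of \cite{Castaing2004} and may be invoked directly.
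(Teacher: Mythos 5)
Your proposal is correct, but it takes a genuinely different route from the paper: the paper offers no argument at all for this lemma, stating it as an immediate consequence of Theorem 2.1.3 of \cite{Castaing2004} (the standard characterization of stable/narrow convergence of Young measures with fixed marginal $\lambda$), which is exactly the fallback you mention in your closing sentence. What you supply instead is a self-contained proof, and its structure is sound: the ``only if'' direction by specializing to $c(t,x)=\mathbf{1}_E(t)c(x)$; the ``if'' direction by reducing to the linear span of such products and then producing a simple-function approximation whose error is controlled \emph{uniformly} in $n$ --- weak convergence of the marginals $\nu_n$ (the case $E=T$) plus Prokhorov's theorem on the Polish space $X$ gives a compact $K$ with $\nu_n(X\setminus K)<\delta$ for all $n$, separability of $C(K)$ makes $t\mapsto c(t,\cdot)|_K$ Bochner-approximable by $\cF$-simple functions, Tietze extension returns the pieces to $C_b(X)$, and the common first marginal $\lambda$ controls the error on $T\times K$ while tightness controls it on $T\times(X\setminus K)$. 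The three-term estimate then closes the argument. Two points are glossed but harmless: weak measurability of $t\mapsto c(t,\cdot)|_K$ should be justified either by noting that point evaluations generate the Borel $\sigma$-algebra of $C(K)$, or via joint measurability of Carath\'eodory functions; and the bound $\|\tilde u_j\|_\infty\le M$ requires truncating the $u_j$ at level $M$ (which only improves the approximation). The trade-off is the expected one: the paper's citation is shorter and defers the real work to the literature, while your argument makes the mechanism visible --- in particular it isolates precisely why tightness is what substitutes for the non-separability of $C_b(X)$ --- at the cost of about a page of standard but nontrivial machinery (Prokhorov, Pettis, Bochner approximation, Tietze).
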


\begin{proof}[Proof of the sufficiency part of Theorem~\ref{thm-rcd}]

Given a sub-$\sigma$-algebra $\cG$ of $\cF$ and an $\cF$-measurable correspondence $F$ from $T$ to $X$, let $\psi$ be a Borel measurable mapping from $T$ to the interval $Y =[0,1]$ such that $\cG$ is the $\sigma$-algebra generated by $\psi$ (see \citet[Theorem 6.5.5]{Bogachev2007}). Then the correspondence $F_1$ as defined by $F_1(t) =\{\psi(t)\} \times F(t)$ for any $t \in T$ is $\cF$-measurable from $T$ to $Y\times X$. Let $\kappa = \lambda \psi^{-1}$, $g=(\psi,f)$ be a $\cT$-measurable selection of $F_1$ and $\nu^{(\psi,f)}=\lambda g^{-1}$. Then $\nu^{(\psi,f)}_Y=\kappa$. Since $X$ and $Y$ are both Polish spaces, there exists a  family of Borel probability measures $\{\vartheta^{(\psi,f)}(y,\cdot)\}_{y\in Y}$ ($\kappa$-a.e. uniquely determined) in $\cM(X)$, which is the disintegration of $\nu^{(\psi,f)}$ with respect to $\kappa$ on $Y$.

Define a transition probability $\mu^f$ from $T$ to $\cM(X)$ as $\mu^f(t,B)=\vartheta^{(\psi,f)}(\psi(t),B)$ for each $t\in T$ and Borel set $B\subseteq X$. Let
$$\cR_\psi=\{\mu^f:f \mbox{ is a } \cT\mbox{-measurable selection of } F\}.
$$
We shall prove that $\cR_\psi$ coincides with $\cR_F^{(\cT,\cG)}$.

Fix an arbitrary $\cT$-measurable selection $f$ of $F$. For any $E\in \cG$, there exists a Borel subset $E'\subseteq Y$ such that $\lambda(E\triangle \psi^{-1}(E'))=0$. For any Borel subset $B$ of $X$,
\begin{align*}
& \quad \int_E  \mu^f(t,B) \rmd\lambda(t)= \int_E  \vartheta^{(\psi,f)}(\psi(t),B) \rmd\lambda(t)  \\
&  = \int_{E'} \vartheta^{(\psi,f)}(y,B) \rmd\kappa(y) =  \nu^{(\psi,f)}(E'\times B)  =  \lambda(E\cap f^{-1}(B)).
\end{align*}
Thus, $\mu^f(\cdot, B)= \mathbb{E} \left[1_B(f)|\cG \right] =\mu^{f|\cG}(\cdot, B)$. By the essential uniqueness of regular conditional distribution, we have  $\cR_\psi=\cR_F^{(\cT,\cG)}$.

\

B1. Suppose that $f_1$ and $f_2$ are two $\cT$-measurable selections of $F$ and $0\leq \alpha\leq 1$. By Theorem~\ref{thm-distribution}, $D_{F_1}^\cT$ is convex, which implies that there exists a $\cT$-measurable selection $f$ of $F$ such that
$$\lambda (\psi,f)^{-1}=\alpha\lambda(\psi,f_1)^{-1}+ (1-\alpha)\lambda(\psi,f_2)^{-1}.$$
That is, $\nu^{(\psi,f)}=\alpha \nu^{(\psi,f_1)} + (1-\alpha)\nu^{(\psi,f_2)}$.

To show $\mu^{f|\cG}=\alpha \mu^{f_1|\cG} + (1-\alpha) \mu^{f_2|\cG}$, it is equivalent to show  $\mu^{f}=\alpha \mu^{f_1} + (1-\alpha) \mu^{f_2}$.
For any $E\in \cG$, and Borel subsets $E'\subseteq Y$ and $B\subseteq X$ such that $\lambda(E\triangle \psi^{-1}(E'))=0$, we have
\begin{align*}
& \quad \alpha\int_E \mu^{f_1}(t,B) \rmd\lambda(t) + (1-\alpha)\int_E  \mu^{f_2}(t,B) \rmd\lambda(t)\\
& = \alpha \nu^{(\psi,f_1)}(E'\times B)  + (1-\alpha) \nu^{(\psi,f_2)}(E'\times B) \\
& = \nu^{(\psi,f)}(E'\times B) \\
& = \int_E  \mu^{f}(t,B) \rmd\lambda(t).
\end{align*}
Therefore, $\cR_F^{(\cT,\cG)}$ is convex.

\

B2. Let $\{f_n\}_{n=1}^\infty$ be a sequence of $\cT$-measurable selections of $F$. Assume that $\{\mu^{f_n|\cG}\}_{n=1}^\infty$ is weakly convergent to some $\mu$ in $\cR^\cG$. Let $\nu$ be a Borel probability measure on $Y\times X$ such that
$$\nu(E'\times B)=\int_{\psi^{-1}(E')} \mu(t,B)\rmd\lambda(t),$$
where $E'$ and $B$ are Borel sets in $Y$ and $X$.

For any $c \in C_b(Y\times X)$, we have
\begin{align*}
\lim_{n\to \infty} \int_{Y\times X} c(y,x) \rmd\nu^{(\psi,f_n)}
& =\lim_{n\to \infty}\int_Y \int_X c(y,x) \vartheta^{(\psi,f_n)}(y,\rmd x)\rmd \kappa(y)\\
& = \lim_{n\to \infty}\int_T \int_X c(\psi(t),x) \mu^{f_n}(t,\rmd x)\rmd \lambda(t)\\
& = \int_T \int_X c(\psi(t),x) \mu(t,\rmd x)\rmd \lambda(t)\\
& = \int_{Y\times X} c(y,x) \rmd \nu(y,x),
\end{align*}
which implies that $\nu^{(\psi,f_n)}$ converges weakly to $\nu$.
The first equality is the disintegration; the second equality is changing of variables; the third equality is due to the weak convergence; and the last equality is due to the definition of $\nu$.

Since $\nu^{(\psi,f_n)}\in D_{F_1}^\cT$ for $n \ge 1$ and $D_{F_1}^\cT$ is closed, $\nu\in D_{F_1}^\cT$. That is, there exists a $\cT$-measurable selection $f$ of $F$ such that $\nu =\lambda (\psi,f)^{-1}=\nu^{(\psi,f)}$. Therefore, $\mu= \mu^{f|\cG}\in\cR_F^{(\cT,\cG)}$, $\cR_F^{(\cT,\cG)}$ is closed.

\

B3. Let $\{f_n\}_{n=1}^\infty$ be a sequence of $\cT$-measurable selections of $F$. We need to show that there exists a subsequence of $\{\mu^{f_n|\cG}\}_{n=1}^{\infty}$ that converges weakly to some element in $\cR_{F}^{(\cT,\cG)}$.

Since $D_{F_1}^\cT$ is compact, there is a subsequence of $\{f_n\}_{n=1}^\infty$, say $\{f_n\}_{n=1}^\infty$ itself, and a $\cT$-measurable selection $f$ of $F$ such that $\lambda (\psi,f_n)^{-1}$ converges weakly to $\lambda (\psi,f)^{-1}$ as $n$ goes to infinity. For any $E\in \cG$, any Borel subsets $E'\subseteq Y$ and $B\subseteq X$ such that $\lambda(E\triangle \psi^{-1}(E'))=0$, and any bounded continuous function $c$ on $X$ with an absolute bound $M$,
\begin{align*}
\int_{E} \left[\int_X c(x) \mu^{f_n}(t,\rmd x) \right]\rmd \lambda(t)
& = \int_{E'\times X} c(x) \rmd \nu^{(\psi,f_n)}(y,x) \\
& = \int_{Y\times X} 1_{E'}(y) c(x) \rmd \nu^{(\psi,f_n)}(y,x).
\end{align*}
For any $\epsilon > 0$, Lusin's Theorem (\citet[Theorem 7.1.13]{Bogachev2007}) implies that there exists a continuous function $h_\epsilon$ from $Y=[0, 1]$ to $[0, 1]$ such that $\kappa (\{ y \in Y: h_\epsilon(y) \ne 1_{E'}(y)\}) < \epsilon$. Since $\lambda (\psi,f_n)^{-1}$ converges weakly to $\lambda (\psi,f)^{-1}$,
$$\int_{Y\times X} h_\epsilon(y) c(x) \rmd \nu^{(\psi,f_n)}(y,x) \to \int_{Y\times X} h_\epsilon(y) c(x) \rmd \nu^{(\psi,f)}(y,x).
$$
Since
$$\left| \int_{Y\times X} 1_{E'}(y) c(x) \rmd \nu^{(\psi,f_n)}(y,x) - \int_{Y\times X} h_\epsilon(y) c(x) \rmd \nu^{(\psi,f_n)}(y,x) \right| < M\epsilon
$$
and
$$\left| \int_{Y\times X} 1_{E'}(y) c(x) \rmd \nu^{(\psi,f)}(y,x) - \int_{Y\times X} h_\epsilon(y) c(x) \rmd \nu^{(\psi,f)}(y,x) \right| < M\epsilon,
$$
we have
$$ \int_{Y\times X} 1_{E'}(y) c(x) \rmd \nu^{(\psi,f_n)}(y,x) \to  \int_{Y\times X} 1_{E'}(y) c(x) \rmd \nu^{(\psi,f)}(y,x).
$$
That is,
$$\int_{E} \left[\int_X c(x) \mu^{f_n}(t,\rmd x) \right]\rmd \lambda(t) \to   \int_E \int_X c(x) \mu^f(t,\rmd x)\rmd \lambda(t).
$$
By Lemma \ref{lem-rcd weak}, $\mu^{f_n|\cG}$ converges weakly to $\mu^{f|\cG}$ as $n$ goes to infinity, and hence $\cR_F^{(\cT,\cG)}$ is compact.

\

B4. Suppose that $z_n \to z_0$ in $Z$ and $\mu^{g_n|\cG}$ converges weakly to $\mu$ as $n$ goes to infinity, where $g_n$ is a $\cT$-measurable selection of $G_{z_n}$ for each $n\ge 1$. We will show that $\mu\in \cR_{G_{z_0}}^{(\cT,\cG)}$.

Let $\nu$ and $\nu_n$ be the Borel probability measures on $Y\times X$ such that
$$\nu(E'\times B)=\int_{\psi^{-1}(E')}\mu(t,B)\rmd \lambda (t),$$
and
$$\nu_n(E'\times B)=\int_{\psi^{-1}(E')}\mu^{g_n|\cG}(t,B)\rmd \lambda (t),$$
where $E'$ and $B$ are Borel sets in $Y$ and $X$ respectively. Following  the proof of (B2), one can show that there is a subsequence of $\{\nu_n\}$, say $\{\nu_n\}$ itself, which converges weakly to $\nu$.

Define $\Psi(t,z)= \{\psi(t)\} \times G(t,z)$. Then $(\psi,g_n)$ is a $\cT$-measurable selection of $\{\psi\} \times G_{z_n}$ and $\nu_n=\lambda (\psi,g_n)^{-1}$.
Since $D^\cT_{\Psi_z}$ is upper hemicontinuous from $Z$ to $\cM(Y\times X)$,  $\nu\in D^\cT_{\Psi_{z_0}}$. That is, there is a $\cT$-measurable selection $g$ of $G_{z_0}$ such that $\nu=\lambda (\psi,g)^{-1}$.  It is easy to see that $\mu=\mu^{g|\cG}\in \cR_{G_{z_0}}^{(\cT,\cG)}$, and hence the correspondence $H$ is upper hemicontinuous.

\

B5. Since $G$ is $\cG$-measurable, the function $t\rightarrow G(t)(B)$ is $\cG$-measurable for each Borel set $B$ in $X$.
Let ${\hat G}$ be the correspondence from $T\rightarrow X$ such that ${\hat G}(t)=supp\: G(t)$, and ${\hat G}_1 (t)=\{\psi(t)\} \times {\hat G} (t)$ for ant $t \in T$. Then ${\hat G}_1$ is a closed valued $\cG$-measurable correspondence.

Let $\nu$ be the Borel probability measure on $X\times Y$ such that for any Borel sets $B_X$ in $X$ and $B_Y$ in $Y$,
$$\nu(B_X\times B_Y)=\int_{\psi^{-1}(B_Y)}G(t)(B_X)\rmd\lambda(t).$$
Then, for any open sets $O_X$ in $X$ and $O_Y$ in $Y$, we have
\begin{align*}
\nu(O_X\times O_Y)
& = \int_{\psi^{-1}(O_Y)}G(t)(O_X)\rmd\lambda(t) = \int_{\psi^{-1}(O_Y)\cap\{t:G(t)(O_X)>0\}}G(t)(O_X)\rmd\lambda(t) \\
& \le \lambda \left(\psi^{-1}(O_Y)\cap\{t:G(t)(O_X)>0\} \right) =\lambda \left({\hat G}_1^{-1}(O_X\times O_Y) \right).
\end{align*}
By Proposition~3.5 in \cite{KS2009}, $\nu$ belongs to the closure of $D_{{\hat G}_1}^{\cG}$; that is, $\nu\in D_{{\hat G}_1}^{\cT}$. There exists a $\cT$-measurable selection $g$ of ${\hat G}$ such that $\lambda(\psi,g)^{-1}=\nu$. For any $E\in \cG$, Borel subsets $E'\subseteq Y$ and $B\subseteq X$ such that $\lambda(E\triangle \psi^{-1}(E'))=0$,
\begin{align*}
&\int_{E}G(t)(B)\rmd\lambda(t)
 = \int_{\psi^{-1}(E')}G(t)(B)\rmd\lambda(t) =\nu(E'\times B)\\
& = \lambda\left(\psi^{-1}(E')\cap g^{-1}(B)\right) = \lambda(g^{-1}(B)\cap E) = \int_E \mathbb{E} \left[ 1_B (g) | \cG \right] \rmd\lambda(t).
\end{align*}
Hence $\mu^{g|\cG}=G$. This completes the proof.
\end{proof}

Note that in the proof of the sufficiency part of Theorem \ref{thm-rcd} (B5), the mapping $g$ constructed indeed satisfies the following property: for $\lambda$-almost all $t\in T$, $g(t)\in supp\, G(t)$. Below, we show that this property is also implied by the conclusion of (B5).

\begin{coro}
Given $G\in\cR^{\cG}$ and a $\cT$-measurable mapping $g$ from $t$ to $X$. If $\mu^{g|\cG}=G$, then for $\lambda$-almost all $t\in T$, $g(t)\in supp\, G(t)$.
\end{coro}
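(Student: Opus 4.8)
The plan is to reduce the a.e.\ statement to a countable family of events, using the second countability of the Polish space $X$, and then to exploit the defining property of the regular conditional distribution on $\cF$-measurable conditioning sets.

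First I would fix a countable base $\{O_n\}_{n\in\bN}$ for the topology of $X$. The support of a Borel probability measure $\nu$ on $X$ is characterized by the base as follows: a point $x$ fails to lie in $\operatorname{supp}\nu$ precisely when some basic open set contains $x$ and has $\nu$-measure zero. Hence the exceptional set decomposes as
$$\{t:g(t)\notin\operatorname{supp}G(t)\}=\bigcup_{n\in\bN}A_n,\qquad A_n=\{t:g(t)\in O_n\}\cap\{t:G(t)(O_n)=0\},$$
and it suffices to prove $\lambda(A_n)=0$ for each $n$.

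Next I would note that $C_n:=\{t:G(t)(O_n)=0\}$ is $\cF$-measurable, since $G\in\cR^{\cF}$ makes $t\mapsto G(t)(O_n)$ an $\cF$-measurable function, while $g^{-1}(O_n)=\{t:g(t)\in O_n\}$ is $\cT$-measurable. Because $C_n\in\cF$, I can integrate the indicator $1_{O_n}(g)$ over $C_n$ and pass to its conditional expectation:
$$\lambda(A_n)=\int_{C_n}1_{O_n}(g(t))\,\rmd\lambda(t)=\int_{C_n}E\bigl[1_{O_n}(g)\mid\cF\bigr](t)\,\rmd\lambda(t).$$
By the hypothesis $\mu^{g|\cF}=G$ we have $E[1_{O_n}(g)\mid\cF](t)=\mu^{g|\cF}(t)(O_n)=G(t)(O_n)$ for $\lambda$-a.e.\ $t$; on $C_n$ this quantity vanishes by the definition of $C_n$, so $\lambda(A_n)=0$.

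The only point requiring care, rather than a genuine obstacle, is the bookkeeping of null sets: the identity $\mu^{g|\cF}(\cdot)(O_n)=G(\cdot)(O_n)$ holds only up to a $\lambda$-null set depending on $n$, but since it is invoked solely for the countably many sets $O_n$, the union of these null sets is still $\lambda$-null. Taking the union over $n$ of the null sets $A_n$ then gives $g(t)\in\operatorname{supp}G(t)$ for $\lambda$-a.e.\ $t$, completing the argument.
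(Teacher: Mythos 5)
Your proof is correct, and it takes a genuinely different route from the paper's. The paper works on the product space $T\times X$: from $\mu^{g|\cF}=G$ it gets the rectangle identity $\int_T 1_D(t)1_B(g(t))\,\rmd\lambda(t)=\int_T\int_X 1_D(t)1_B(x)\,G(t,\rmd x)\,\rmd\lambda(t)$ for $D\in\cF$, $B\in\cB(X)$, then views $c(t,x)=1_{\operatorname{supp} G(t)}(x)$ as an $\cF\otimes\cB(X)$-measurable function, approximates it by simple functions, and concludes $\int_T c(t,g(t))\,\rmd\lambda(t)=\int_T\int_X c(t,x)\,G(t,\rmd x)\,\rmd\lambda(t)=1$, i.e.\ the joint law of $(t,g(t))$ gives full mass to the graph of the support correspondence. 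You avoid the product space entirely: second countability of $X$ lets you write the exceptional set as $\bigcup_n A_n$ with $A_n=g^{-1}(O_n)\cap C_n$ and $C_n=\{t:G(t)(O_n)=0\}\in\cF$, and each $A_n$ is killed by integrating $E[1_{O_n}(g)\mid\cF]=G(\cdot)(O_n)$ over the conditioning set $C_n$. Your argument is the more elementary one: it needs neither the $\cF\otimes\cB(X)$-measurability of the support graph (which the paper asserts without detail) nor the passage from rectangles to general product-measurable sets (which, strictly speaking, is a monotone-class/$\pi$-$\lambda$ step on top of the monotone convergence the paper cites); all you use is the definition of the RCD on countably many test sets, with the null-set bookkeeping across $n$ handled correctly. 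What the paper's route buys in exchange is generality: once the two measures on $T\times X$ are identified, the same computation shows $(t,g(t))$ lies a.e.\ in \emph{any} $\cF\otimes\cB(X)$-measurable set of full measure under $G(t,\rmd x)\,\rmd\lambda(t)$, whereas your base argument is tailored to the support characterization.
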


\begin{proof}
Let $id_T$ be the identity mapping on $T$, and $\tau$ the probability measure on $T \times X$ such that for any $E \in \cG \otimes \cB(X)$, $\tau(E) = \int_T G(t, E_t) \rmd\lambda(t)$.
Since $\mu^{g|\cG}=G$, it follows from the definition of $\mu^{g|\cG}$ that for any $D\in \cG$ and any Borel set $B$ in $\cB(X)$,
\begin{equation}\label{equa-1}
\int_{T} 1_{D \times B} (t, g(t)) \rmd\lambda(t) = \int_T 1_D(t) G(t,B) \rmd\lambda(t).
\end{equation}
Hence, the probability measures $\tau$ and $\lambda \left(id_T, g\right)^{-1}$ on $\cG \otimes \cB(X)$ agree on the measurable rectangles. Since the class of measurable rectangles is closed under finite intersections (i.e., a $\pi$-system), Dynkin’s Lemma (\cite[p.~136]{AB2006}) implies that $\tau$ and $\lambda \left(id_T, g\right)^{-1}$ are the same.

Define a mapping $c$ from $T\times X$ to $\bR$ as $c(t,x)=1_{supp\, G(t)}(x)$. Then $c$ is positive and $\cG \otimes \cB(X)$-measurable. We have
$$\int_{T} c(t,g(t)) \rmd\lambda(t) = \int_{T \times X} c(t,x) d \tau =
\int_T\int_X c(t,x) G(t,\rmd x) \rmd\lambda(t) = 1.$$
Thus, $c(t,g(t))=1$ for $\lambda$-almost all $t\in T$.
\end{proof}

\section{Proof of Theorem~\ref{thm-lg}}\label{sec-game proof}

Notice that the probability space $(T,\cF,\lambda)$ may not be atomless. There exist two disjoint $\cF$-measurable subsets $T_1$ and $T_2$ such that $T_1\cup T_2= T$, and $\lambda|_{T_1}$ is the atomless part of $\lambda$, while $\lambda|_{T_2}$ is the purely atomic part of $\lambda$. Let $\lambda(T_1) = 1 - \gamma$. We shall assume $0 \le \gamma < 1$ to avoid triviality.

We first provide an example. Fix $n\geq 2$. For $1\le i\le n$, denote
$$A^{i+}=\{(0,\cdots,0,a_i,0,\cdots,0)\colon 0 \le a_i \leq 2\},$$
which is the set of $n$-dimensional vectors such that only the $i$-th entry is nonzero. Similarly, let
$$A^{i-}=\{(0,\cdots,0,a_i,0,\cdots,0) \colon -2\leq a_i \le 0\}.$$
Denote $A=\cup_{1\le i\le n}(A^{i+} \cup A^{i-})$. Then $A$ is a compact absolute retract\footnote{A compact metric space $X$ is called an absolute retract if for any metric space $Y$ containing $X$, there is a 
continuous mapping $r \colon Y \to X$ such that the restriction of $r$ to $X$ is the identity map on $X$ (such a mapping is called a retraction).}
of $\bR^n$.\footnote{Let $C$ be the cube $[-2, 2]^n \subseteq \bR^n$. Then $A$ is a subset of $C$. The following function $r$ is a retraction from $C$ to $A$. For any $(x_1, \ldots, x_n) \in C$, let
$$r(x_1, \ldots, x_n) =
\begin{cases}
(0, \ldots, 0, |x_i| - |x_j|, 0, \ldots, 0), &  \mbox {if } x_i \ge 0 \mbox { and } |x_i| \ge |x_j| \ge |x_k| \mbox{ for any } k \neq i, j,  \\
(0, \ldots, 0, -|x_i| + |x_j|, 0, \ldots, 0), &  \mbox {if } x_i < 0 \mbox { and } |x_i| \ge |x_j| \ge |x_k| \mbox{ for any } k \neq i, j;
\end{cases}
$$
where only the $i$-th component could be nonzero. By Theorem 2-34 in \cite[p.63]{HY1961}, the unit cube in $\bR^n$ is an absolute retract, so is $C$. Then by Lemma~2.1 in \cite{Hanner1951}, $A$ is also an absolute retract.} The Borel $\sigma$-algebras on $A$ is denoted by $\cB_0$.

For any Borel set $E  \subseteq \bR$ and $c\in \bR$, denote $E+c=\{a+c:a\in E\}$.  Let
$$E^{i+}=\{(0,\cdots,0,a,0,\cdots,0):a\in E \cap \bR_+\}$$
and
$$E^{i-}=\{(0,\cdots,0,-a,0,\cdots,0):a\in E\cap \bR_+\}$$
such that  $a$ and $-a$ are in the $i$-th entry. For any $a\in \bR_+$, let
$$a^{i+}=(0,\cdots,0,a,0,\cdots,0),$$
$$a^{i-}=(0,\cdots,0,-a,0,\cdots,0),$$
where $a$ and $-a$ are in the $i$-th entry.

For any set $E\in\mathcal{B}_{0}$, there exists a unique choice of $2n$ sets $\{D_i,E_i\}_{1\le i \le n} \subseteq  \cB([0,2])$ such that $E = \cup_{1 \le i\le n} (D_i^{i+} \cup E_i^{i-})$. We define a Borel probability measure  $\nu_1$  on $A$ as
$$\nu_{1}(E)=\frac{1}{2n(1 - \gamma)} \left\{ \sum_{1\le i \le n}\eta(D_i \cap [1,2 - \gamma)) +  \sum_{1\le i \le n}\eta(E_i \cap [1,2 - \gamma)) \right\}.$$
Let $\nu_0$ be a convex combination of $\nu_1$ and the Dirac measure concentrated at $(0,\cdots,0)$:
$$\nu_0 = (1 - \gamma) \nu_1 + \gamma \delta_{(0,\cdots,0)}.$$

Let $f^{i+}$ be a function from $A^{i+}\times[0,1]$ to $\bR$ as follows:
$$
f^{i+}((0,\cdots,0,a_i,0,\cdots,0),b) =
$$
$$
\begin{cases}
0, & \hspace{-55pt} \mbox{if } b=0 \mbox{ or } a_i \in [0,1] \\
& \hspace{-55pt} \mbox{ or } a_i-1=kb \mbox{ for some } k\in \bN; \\
\frac{1}{2} \min\{a_i-1-2nkb-(2i-2)b, 2nkb+(2i-1)b+1-a_i\}, & \\
& \hspace{-250pt} \mbox{if } a_i-1\in \big( 2nkb+(2i-2)b,  2nkb+(2i-1)b\big) \mbox{ for some } k\in \bN; \\
0, & \hspace{-55pt} \mbox{otherwise}.
\end{cases}
$$

Let $f^{i-}$ be a function from $A^{i-}\times[0,1]$ to $\bR$ as follows:
$$
f^{i-}((0,\cdots,0,-a_i,0,\cdots,0),b) =
$$
$$
\begin{cases}
0, &  \hspace{-55pt} \mbox{if } b=0 \mbox{ or } a_i \in [0, 1] \\
& \hspace{-55pt} \mbox{ or } a_i-1=kb \mbox{ for some } k\in \bN; \\
\frac{1}{2} \min\{a_i-1-2nkb-(2i-1)b, 2nkb+2ib+1-a_i\}, & \\
& \hspace{-220pt} \mbox{if } a_i-1\in \big( 2nkb+(2i-1)b, 2nkb+2ib\big) \mbox{ for some } k\in \bN; \\
0, & \hspace{-55pt} \mbox{otherwise}.
\end{cases}
$$

The following figures illustrate the functions $f^{i+}$ and $f^{i-}$ with $b=\frac{1}{8}$ and $n=2$.
\begin{figure}[htb!]
\centering
\includegraphics[width=\textwidth]{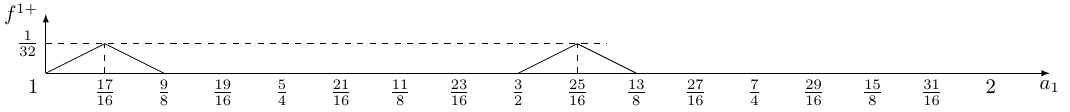}
\caption{The graph of $f^{1+}((a_1, 0),b)$}
\end{figure}
\begin{figure}[htb!]
\centering
\includegraphics[width=\textwidth]{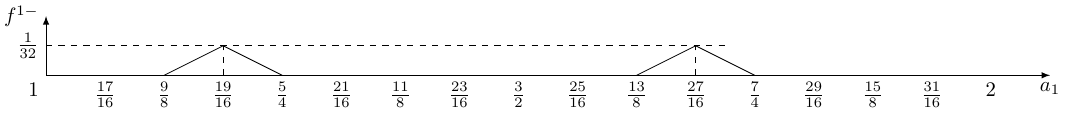}
\caption{The graph of $f^{1-}((-a_1, 0),b)$}
\end{figure}
\begin{figure}[htb!]
\centering
\includegraphics[width=\textwidth]{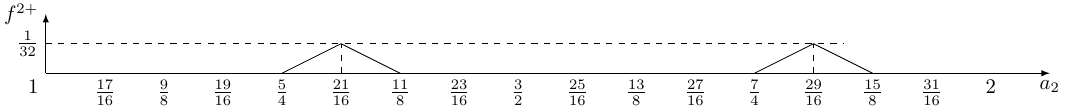}
\caption{The graph of $f^{2+}((0, a_2),b)$}
\end{figure}
\begin{figure}[htb!]
\centering
\includegraphics[width=\textwidth]{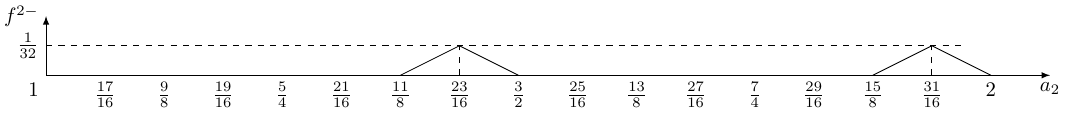}
\caption{The graph of $f^{2-}((0, -a_2),b)$}
\end{figure}
\FloatBarrier

Define a function $f$ from $A\times [0,1]$ to $\bR$ as follows. For $i=1,2,\cdots n$,
$$ f(a,b) =
\begin{cases}
f^{i+}(a,b), & \text{if }a\in A^{i+}; \\
f^{i-}(a,b), & \text{if }a\in A^{i-}.
\end{cases}
$$
It is easy to see that the function $f$ is continuous on $A\times [0,1]$. Let $f_0$ be a function from $\cM(A)$ to $[0,1]$ defined as $f_0(\nu)=\frac{1}{2n}d(\nu_0,\nu)$ for any $\nu \in \cM(A)$, where $d(\cdot, \cdot)$ is the Prohorov metric on $\cM(A)$.

By Lemma 6 of \cite{HSS2017}, there exists a measure-preserving mapping $\phi \colon (T_1,\cF^{T_1},\lambda) \to ([1, 2 - \gamma),\cB_1,\eta_1)$ such that for any $E\in\cF^{T_1}$, there exists a set $E'\in\cB_1$ with $\lambda(E\triangle\phi^{-1}(E'))=0$, where $\cB_1$ is the Borel $\sigma$-algebra on $[1, 2 - \gamma)$, and $\eta_1$ is the Lebesgue measure on $\cB_1$.

Now we are ready to describe the example.

\begin{exam}\label{exam}
Fix an integer $n\ge2$. Let $(T,\cT,\lambda)$ be the agent space and $A$ the action space. Define an $\cF$-measurable large game $G\colon T\to\cU_A$ as follows:
$$G(t)(a,\nu)=\begin{cases}
f(a,f_0(\nu))-\vert \phi(t)-\|a\|\vert, & \mbox{ if } t\in T_1; \\
-\|a\|,  & \mbox{ if }  t \in T_2
\end{cases}$$
for any $a\in A$ and $\nu\in\cM(A)$, where $\|\cdot\|$ is the usual Euclidean norm in $\bR^n$.
\end{exam}

\begin{lem}\label{lem-example1}
Suppose that there exists a $\cT$-measurable Nash equilibrium $g$ for the large game $G$ above. Then there exists a $\cT$-measurable partition $\{E_j,D_j\}_{1\le j \le n}$ of $T_1$ such that for each $j=1,\ldots,n$,
\begin{enumerate}
\item $\lambda^{T_1}(D_j)=\lambda^{T_1}(E_j)=\frac{1}{2n}$,
\item $D_j$ and $E_j$ are independent of $\cF^{T_1}$ under the probability measure $\lambda^{T_1}$.
\end{enumerate}
\end{lem}

\begin{proof}
Notice that $G_t$ is a continuous function on $A\times\cM(A)$ for any $t\in T$. Moreover, it is clear that $G$ is $\cF$-measurable. Suppose that $g$ is a $\cT$-measurable Nash equilibrium of $G$. Let $\vartheta= \lambda g^{-1}$. We shall prove that $\vartheta = \nu_0$.

Suppose that $\vartheta\neq \nu_0$. Let $b_0=\frac{1}{2n}\dist(\vartheta,\nu_0)$. Then $0<b_0\le\frac{1}{2n}$. It is obvious that $g(t) = (0,\cdots,0)$ for each agent $t \in T_2$. Below, we shall fix an agent $t \in T_1$ with $\phi(t)\neq kb_0$ for any $k\in\bN$.\footnote{Without loss of generality, we can ignore the set $\{t \colon \phi(t) =  kb_0 \mbox{ for some } k\}$, which is of probability zero.}

If $g(t)\in A^{i+}$, then for any $(0,\cdots,0,x,0,\cdots,0)\in A^{i+}$ with $x\neq\phi(t)$,
$$G(t)(x^{i+},\vartheta)-G(t)(\phi(t)^{i+},\vartheta) = f^{i+}(x^{i+},f_0(\vartheta))-|\phi(t)-x| -f^{i+}(\phi(t)^{i+},f_0(\vartheta))  <0.
$$
The inequality is due to the observation that  $f^{i+}$ is a Lipschitz functions in terms of the $i$-th coordination with the Lipschitz constant $\frac{1}{2}$ for all $i=1,\cdots, n$. Thus, $g(t) = \phi(t)^{i+}$. Similarly, we could show that $g(t) = \phi(t)^{i-}$ if $g(t)\in A^{i-}$.

Next, we determine the column that the nonzero coordination of $g(t)$ locates at; that is, the value of $i$ and the sign of $g(t)$. For agent $t$, there is a unique pair $(k,i')$ with $k \in \bN$ and $i' \in \{0,\cdots,2n-1\}$ such that $\phi(t)-1 \in ((2nk+i')b_0, (2nk+i'+1)b_0)$.
\begin{enumerate}
\item If $i'$ is even, then the value of $G(t)(g(t),\vartheta)$ is positive only if $i = \frac{i'}{2}+1$ and $g(t)\in A^{i+}$.
\item If $i'$ is odd, then the value of $G(t)(g(t),\vartheta)$ is positive only if $i = \frac{i'+1}{2}$ and $g(t)\in A^{i-}$.
\end{enumerate}
Therefore,
$$g(t) =
$$
$$\begin{cases}
\phi(t)^{(\frac{i'}{2}+1)+}, & \text{if } \phi(t) \in ((2nk+i')b_0 + 1, (2nk+i'+1)b_0+1) \mbox{ for some even } i'; \\
\phi(t)^{(\frac{i'+1}{2})-}, & \text{if } \phi(t) \in ((2nk+i')b_0 + 1, (2nk+i'+1)b_0+1) \mbox{ for some odd } i'; \\
(0,\cdots,0), &\text{if } t \in T_2.
\end{cases}
$$

Below, we show that $\dist(\vartheta,\nu_0)$ is at most $(2n-1)b_0$. Let $\epsilon=(2n-1)b_0$.

For $i=1,\cdots,n$, let $W^{i+}$ be the support of $\vartheta$ on $A^{i+}$. By the analysis above, the set $W^{i+}$ is the union of finite disjoint intervals, which are denoted by $W^{i+}_1,\ldots,W^{i+}_m$ in the increasing order. The distance between $W^{i+}_\ell$ and $W^{i+}_{\ell+1}$ is $(2n-1)b_0$ for $\ell=1,\ldots,m-1$. It is clear that the length of $W^{i+}_\ell$ is $b_0$ for $\ell=1,2,\ldots,m-1$, and the length of $W^{i+}_m$ is at most $b_0$. For any Borel set $E\in \cB$, consider the set $E^{i+}$. Without loss of generality, we could assume that $E^{i+}$ does not contain any endpoint of the subintervals $\{W^{i+}_\ell\}_{1 \le \ell \le m}$. For $1\le\ell\leq m$, let $E^{i+}_\ell=W^{i+}_\ell\cap E^{i+}$. Then $E^{i+}_\ell,E^{i+}_\ell+b_0,\ldots,E^{i+}_\ell+(2n-1)b_0$ are all disjoint, and $E^{i+}_\ell+tb_0$ is included in $(E^{i+})^\epsilon$ for $t=0,\ldots,2n-1$, where $(E^{i+})^\epsilon$ is the $\epsilon$-neighborhood of $E^{i+}$. We have
\begin{align*}
    		& \vartheta(E^{i+})   \\
={} 		& \sum_{\ell=1}^{m-1} \vartheta(E^{i+}_\ell)+ \vartheta(E^{i+}_m) =  2n\sum_{\ell=1}^{m-1} \nu_0(E^{i+}_\ell)+ \vartheta(E^{i+}_m) \le  2n\sum_{\ell=1}^{m-1} \nu_0(E^{i+}_\ell)+ b_0 \\
={} 		& \sum_{\ell=1}^{m-1} \Big(\nu_0(E^{i+}_\ell)+ \nu_0(E^{i+}_\ell+b_0) + \cdots + \nu_0(E^{i+}_\ell+(2n-1)b_0)\Big) + b_0  \\
\le{} 	& \nu_0\big((E^{i+})^\epsilon\big)+b_0.
\end{align*}
Similarly, we could prove that $\vartheta(E^{i-}) \le \nu_0\big((E^{i-})^\epsilon\big)+b_0$ for $i=1,\cdots,n-1$.

Let $W^{n-}$ be the support of $\vartheta$ on $A^{n-}$. The set $W^{n-}$ is the union of finite disjoint intervals, which are denoted by $W^{n-}_1,\ldots,W^{n-}_m$ in the increasing order. The distance between $W^{n-}_\ell$ and $W^{n-}_{\ell+1}$ is $(2n-1)b_0$ for $\ell=1,2,\ldots,m-1$. In addition, the distance between $1^{n-}$ and $W^{n-}_{1}$ is also $(2n-1)b_0$. The length of $W^{n-}_\ell$ is $b_0$ for $\ell=1,2,\ldots,m-1$, and the length of $W^{n-}_m$ is at most $b_0$. Take a Borel set $E \subseteq [1, 2 - \gamma)$. Without loss of generality, we may assume that $E$ does not contain any endpoint of the subintervals $\{W^{n-}_\ell\}_{1 \le \ell \le m}$. For $1\le\ell\leq m$, let $E^{n-}_\ell=W^{n-}_\ell\cap E^{n-}$. Then $E^{n-}_\ell,E^{n-}_\ell-b_0,\ldots,E^{n-}_\ell-(2n-1)b_0$ are all disjoint, and $(E^{n-}_\ell-tb_0)$ is included in $(E^{n-})^\epsilon$ for $t=0,\ldots,2n-1$. We have
\begin{align*}
    		& \vartheta(E^{n-})   \\
={} 		& \sum_{\ell=1}^{m} \vartheta(E^{n-}_\ell) = 2n\sum_{\ell=1}^{m} \nu_0(E^{n-}_\ell)  \\
={} 		& \sum_{\ell=1}^{m} \Big(\nu_0(E^{n-}_\ell)+ \nu_0((E^{n-}_\ell-b_0)) + \cdots + \nu_0(E^{n-}_\ell-(2n-1)b_0)\Big)  \\
\le{} 	& \nu_0\big((E^{n-})^\epsilon \big).
\end{align*}

Given any Borel set $C\subseteq A$ such that $(0,\cdots,0)\notin C$. Then $C=\cup_{1\leq k\leq n} (C^{k+}\cup C^{k-})$, where $C^{k+}\subseteq A^{k+}$ and $C^{k-}\subseteq A^{k-}$. We have
\begin{align*}
\vartheta(C)
& = \sum_{k=1}^{n} [\vartheta(C^{k+}) + \vartheta(C^{k-})]\\
& \leq \sum_{k=1}^{n}\{\nu_0\big((C^{k+})^\epsilon \big)+b_0\} + \sum_{k=1}^{n-1}\{\nu_0\big((C^{k-})^\epsilon \big)+b_0\} + \nu_0\big((C^{n-})^\epsilon \big)\\
& \leq \nu_0(C^\epsilon) + (2n-1)b_0 = \nu_0(C^\epsilon) + \epsilon.
\end{align*}
Similarly, we could prove the case with $(0,\cdots,0)\in C$. Hence, $d(\vartheta,\nu_0)\leq (2n-1)b_0 =\frac{2n-1}{2n} d(\vartheta,\nu_0)$, which is a contradiction. Therefore, we prove $\vartheta=\nu_0$ as claimed in the beginning of the proof of this lemma.

From now on, we shall work with the case that $\vartheta=\nu_0$. Then $f_0(\vartheta) = 0$, and hence $f(a,f_0(\vartheta))=0$ for any $a\in A$ and $G(t)(a,\vartheta)=-|\phi(t)-\|a\||$ for any $t\in T_1$. The best response correspondence is
$$H(t) =
\begin{cases}
\{(0,\cdots,0,\phi(t),0,\cdots,0)_i,(0,\cdots,0,-\phi(t),0,\cdots,0)_i\}_{1\le i \le n}, & t \in T_1, \\
\{(0,\cdots, 0)\}, & t \in T_2,
\end{cases}
$$
where $(0,\cdots,0,\phi(t),0,\cdots,0)_i$ means that $\phi(t)$ is in the $i$-th entry. By the definition of Nash equilibria, $g(t)\in H(t)$ for $\lambda$-almost all $ t \in T$.

For any $C\in\cF^{T_1}$, by the choice of $\phi$, there exists a set $C_1\in\cB_1$ such that $\lambda(C\triangle\phi^{-1}(C_1))=0$. Define
$$D_i=\{t \in T_1\colon g(t)=(0,\cdots,0,\phi(t),0,\cdots, 0)_i\}$$
and
$$E_i=\{t \in T_1\colon g(t)=(0,\cdots,0,-\phi(t), 0,\cdots, 0)_i\}_i.$$
Thus, we have
\begin{align*}
\lambda (D_i\cap C)    & =\lambda(D_i\cap \phi^{-1}(C_1))=\lambda(g\in C_1^{i+}) =\nu_0(C_1^{i+}) \\
                & =\frac{1}{2n}\eta(C_1)=\frac{1}{2n} \lambda (\phi\in C_1)=\frac{1}{2n} \lambda (C),
\end{align*}
and hence $\lambda^{T_1}(D_i)=\frac{1}{2n}$. Therefore, $D_i$ is independent of $\cF^{T_1}$ under $\lambda^{T_{1}}$ for $i=1,\cdots,n$. Similarly, we could show the same result for $E_i$, $i=1,\cdots,n$. In particular, $\{D_1, E_1, \ldots, D_{n}, E_n\}$ is a $\cT$-measurable partition of $T_1$. This completes the proof.
\end{proof}

Let $K$ be an uncountable compact metric space, $A$ an uncountable compact absolute retract of $\bR^n$, and $\mathcal{U}_{K}$ and  $\mathcal{U}_{A}$ the  spaces of real-valued continuous functions on $K\times\mathcal{M}(K)$ and $A\times\mathcal{M}(A)$, respectively. Lemma \ref{lem-change action} below extends the result in Section 6 of \cite{RSY1995} from the case $A = [-1,1]$ to the case of a general compact absolute retract here.
It shows that if pure strategy equilibria exist in all large games with a fixed uncountable compact metric action space, then for any $n\ge 1$, the existence result can be extended to large games with any uncountable compact absolute retract of $\bR^n$ as the action space.\footnote{The action spaces for the large games in Example 9 of \cite{HSS2017} consist of some parallel line segments. Since those action sets are not compact absolute retracts, we are not able to use the large games over there to prove the necessity of our Theorem \ref{thm-lg} here.} Though the argument is very much similar to the one in \cite[Section 6]{RSY1995}, we provide a detailed proof of Lemma \ref{lem-change action} for the sake of completeness.

\begin{lem}\label{lem-change action}
If for any $\cF$-measurable game $F \colon T \rightarrow \mathcal{U}_{K}$, there exists a $\cT$-measurable function $f\colon T\rightarrow K$ such that $f$ is a Nash equilibrium of the game $F$, then for any $\cF$-measurable game $G \colon T\rightarrow\mathcal{U}_{A}$, there exists a $\cT$-measurable function $g \colon T\rightarrow A$ such that $g$ is a Nash equilibrium of the game $G$.
\end{lem}

\begin{proof}
Since $K$ is uncountable and compact, as discussed in Section~6 of \cite{RSY1995}, $K$ contains a compact subset $M$ which is homeomorphic to the Cantor set $C$; see \cite[p.~11]{Parthasarathy1967}. In addition, for the compact absolute retract $A \subseteq \bR^l$, there is a continuous onto mapping from $C$ to $A$ (see \cite[p.~127]{HY1961}). Let $\varphi$ be a continuous onto mapping from $M$ to $A$. By Tietze's extension theorem (see \cite{Hanner1951}),
$\varphi$ can be extended to a continuous onto mapping $\Upsilon$ from $K$ to $A$. By the Borel cross section theorem (see \cite[Theorem~4.2]{Parthasarathy1967}), there is a Borel measurable mapping $\Xi$ from $A$ to $K$ such that $\Upsilon(\Xi(a))$ = $a$ for all $a\in A$.

For a given $\cF$-measurable game $G \colon T\rightarrow\mathcal{U}_{A}$, define an $\cF$-measurable game $F \colon T \rightarrow \mathcal{U}_{K}$ as follows: for any $t \in T$, $x \in K$ and $\nu \in \cM(K)$,
$$F_t(x, \nu) = G_t\big( \Upsilon(x), \nu \Upsilon^{-1} \big).
$$
By the assumption, there exists a $\cT$-measurable mapping $f \colon T \rightarrow K$ such that $f$ is a Nash equilibrium of the game $F$. Denote $\nu' = \lambda f^{-1}$.

Define a $\cT$-measurable mapping $g$ from $T$ to $A$ as $g(t) = \Upsilon(f(t))$. We need to show that $g$ is a Nash equilibrium of the game $G$. Note that
$$\lambda g^{-1} = \lambda f^{-1} \Upsilon^{-1} = \nu' \Upsilon^{-1}.$$
For player~$t \in T$,
$$G_t(g(t), \lambda g^{-1}) = G_t(\Upsilon(f(t)), \nu' \Upsilon^{-1}) = F_t(f(t), \nu'),
$$
and
$$G_t(a, \lambda g^{-1}) = G_t(\Upsilon(\Xi(a)), \nu' \Upsilon^{-1}) = F_t(\Xi(a), \nu').
$$
Since $f$ is a Nash equilibrium of the game $F$, for $\lambda$-almost all $t \in T$ and all $a \in A$,
$$F_t(f(t), \nu') \ge F_t(\Xi(a), \nu'),$$
which implies that
$$G_t(g(t), \lambda g^{-1}) \ge G_t(a, \lambda g^{-1}).$$
This completes the proof.
\end{proof}

Now we are ready to prove Theorem~\ref{thm-lg}.

\begin{proof}[Proof of Theorem~\ref{thm-lg}]
\

As mentioned in Section~\ref{sec-lg}, we only need to prove the necessity part. By the condition of Theorem~\ref{thm-lg}, for any $\cF$-measurable game $F \colon T\rightarrow\mathcal{U}_{K}$, there exists a $\cT$-measurable function $f \colon T\rightarrow K$ such that $f$ is a Nash equilibrium of the game $F$. Consider the action space $A$ constructed in Example~\ref{exam} above. In particular, $A$ is a compact absolute retract. By Lemma~\ref{lem-change action}, for any $\cF$-measurable game $G \colon T\rightarrow\mathcal{U}_{A}$, there exists a $\cT$-measurable function $g \colon T\rightarrow A$ such that $g$ is a Nash equilibrium of the game $G$. By Lemma~\ref{lem-example1}, $\cF^{T_1}$ admits an asymptotic independent supplement within $\cT^{T_1}$ under $\lambda^{T_1}$. By Lemma~\ref{lem-neq}, $\cT^{T_1}$ is nowhere equivalent to $\cF^{T_1}$ under $\lambda^{T_1}$. Since $\cF$ is purely atomic on $T_2$ under $\lambda$, $\cT$ is nowhere equivalent to $\cF$ under $\lambda$.
\end{proof}

{\small
\singlespacing


\begin{thebibliography}{XX}
\addcontentsline{toc}{section}{References}

\bibitem[Aliprantis and Border(2006)]{AB2006} C.~D.~Aliprantis and K.~C.~Border, Infinite Dimensional Analysis: A Hitchhiker's Guide, Third Edition, Springer, Berlin, 2006.

\bibitem[Bentley and Herrlich(1998)]{BH1998} H.~L.~Bentley and H.~Herrlich, Countable choice and pseudometric spaces, \textit{Topology and its Applications} \textbf{85} (1998), 153--164.

\bibitem[Billingsley(1968)]{Billingsley1968} P.~Billingsley, Convergence of Probability Measures, Wiley, New York, 1968.

\bibitem[Bogachev(2007)]{Bogachev2007} V.~I.~Bogachev, Measure Theory, volumes 1 and 2, Springer-Verlag Berlin Heidelberg, 2007.

\bibitem[Carmona and Podczeck(2014)]{CP2014} G.~Carmona and K.~Podczeck, Existence of Nash equilibrium in games with a measure space of players and discontinuous payoff functions, \textit{Journal of Economic Theory} \textbf{152} (2014), 130--178.

\bibitem[Castaing \textit{et al.}(2004)]{Castaing2004} C.~Castaing, P.~R.~de Fitte and M.~Valadier, Young Measures on Topological Spaces: with Applications in Control Theory and Probability Theory, volume 571 of Mathematics and its Applications, Kluwer Academic Publishers, Dordrecht, 2004.


\bibitem[Di et al.(2015)]{Jin2015}  M.~Di~Nasso, I.~Goldbring, R.~L.~Jin, S.~Leth, M.~Lupini and K.~Mahlburg, High density piecewise syndeticity of sumsets, \textit{Advances in Mathematics} \textbf{278} (2015), 1--33.

\bibitem[Duffie and Sun(2007)]{DS2007} D.~Duffie and Y.~N.~Sun, Existence of independent random matching, \textit{Annals of Applied Probability} \textbf{17} (2007), 386--419.

\bibitem[Durrett(2010)]{Durrett2010} R.~Durrett, Probability: Theory and Examples, Cambridge, New York, 2010.

\bibitem[Fajardo and Keisler(1996)]{FK1996} S.~Fajardo and H.~J.~Keisler,  Existence theorems in probability theory, \textit{Advances in Mathematics} \textbf{120} (1996), 191--257.

\bibitem[Fajardo and Keisler(2002)]{FK2002} S.~Fajardo and H.~J.~Keisler, Model Theory of Stochastic Processes, Lecture Notes in Logic, vol.~14, Assoc. Symbolic Logic, Urbana, IL, 2002.

\bibitem[Hanner(1951)]{Hanner1951} O.~Hanner, Solid spaces and absolute retracts, \textit{Arkiv for Matematik} \textbf{1} (1951), 375--382.

\bibitem[He, Sun and Sun(2017)]{HSS2017} W.~He, X.~Sun and Y.~N.~Sun, Modeling infinitely many agents, \textit{Theoretical Economics} \textbf{12} (2017), 771--815.

\bibitem[Hocking and Young(1961)]{HY1961} J.~G.~Hocking and G.~S.~Young, Topology, Addison-Wesley, Reading, MA, 1961.

\bibitem[Hoover and Keisler(1984)]{HK1984} D.~N.~Hoover and H.~J.~Keisler, Adapted probability distribution, \textit{Transactions of the American Mathematical Society} \textbf{286} (1984), 159--201.

\bibitem[Keisler(1997)]{Keisler1997} H.~J.~Keisler, Rich and saturated adapted spaces, \textit{Advances in Mathematics} \textbf{128} (1997), 242--288.

\bibitem[Keisler and Sun(2009)]{KS2009} H.~J.~Keisler and Y.~N.~Sun, Why saturated probability spaces are necessary, \textit{Advances in Mathematics} \textbf{221} (2009), 1584--1607.

\bibitem[Khan \textit{et al.}(2013)Khan, Rath, Sun and Yu]{KRSY2013} M.~A.~Khan, K.~P.~Rath, Y.~N.~Sun and H.~Yu, Large games with a bio-social typology, \textit{Journal of Economic Theory} \textbf{148} (2013), 1122--1149.

\bibitem[Khan and Sagara(2016)]{KSagara2016} M.~A.~Khan and N.~Sagara, Relaxed large economies with infinite-dimensional commodity spaces: The existence of Walrasian equilibria, \textit{Journal of Mathematical Economics} \textbf{67} (2016), 95--107.

\bibitem[Khan and Sun(1999)]{KS1999} M.~A.~Khan and Y.~N.~Sun, Non-cooperative games on hyperfinite Loeb spaces, \textit{Journal of Mathematical Economics} \textbf{31} (1999), 455--492.

\bibitem[Khan and Zhang(2012)]{KZ2012} M.~A.~Khan and Y.~Zhang, Set-valued functions, Lebesgue extensions and saturated probability spaces, \textit{Advances in Mathematics} \textbf{229} (2012), 1080--1103.

\bibitem[Loeb(1975)]{Loeb1975} P.~A.~Loeb, Conversion from nonstandard to standard measure spaces and applications in probability theory, \textit{Transactions of the American Mathematical Society} \textbf{211} (1975), 113--122.

\bibitem[Loeb and Sun(2006)]{LS2006} P.~A.~Loeb and Y.~N.~Sun, Purification of measure-valued maps, \textit{Illinois Journal of Mathematics} \textbf{50} (2006), 747--762.

\bibitem[Loeb and Sun(2007)]{LS2007} P.~A.~Loeb and Y.~N.~Sun, A general Fatou lemma, \textit{Advances in Mathematics} \textbf{213} (2007), 741--762.

\bibitem[Loeb and Sun(2009)]{LS2009} P.~A.~Loeb and Y.~N.~Sun, Purification and saturation, \textit{Proceedings of the American Mathematical Society} \textbf{137} (2009), 2719--2724.

\bibitem[Loeb and Wolff(2015)]{LW2015} P.~A.~Loeb and M.~Wolff, Nonstandard Analysis for the Working Mathematician, second edition, Springer, 2015.

\bibitem[Noguchi(2009)]{Noguchi2009} M.~Noguchi, Existence of Nash equilibria in large games, \textit{Journal of Mathematical Economics} \textbf{13} (1984), 201--206.

\bibitem[Parthasarathy(1967)]{Parthasarathy1967} K.~R.~Parthasarathy, Probability Measures on Metric Spaces, Academic Press, New York, 1967.

\bibitem[Podczeck(1997)]{Podczeck1997} K.~Podczeck, Markets with infinitely many commodities and a continuum of agents with non-convex preferences, \textit{Economic Theory} \textbf{9} (1997), 385--426.

\bibitem[Podczeck(2008)]{Podczeck2008} K.~Podczeck, On the convexity and compactness of the integral of a Banach space valued correspondence, \textit{Journal of Mathematical Economics} \textbf{44} (2008), 836--852.

\bibitem[Podczeck(2009)]{Podczeck2009} K.~Podczeck, On purification of measure-valued maps, \textit{Economic Theory} \textbf{38} (2009), 399--418.

\bibitem[Qiao and Yu(2014)]{QY2014} L.~Qiao and H.~Yu, On the space of players in idealized limit games, \textit{Journal of Economic Theory} \textbf{153} (2014), 177--190.

\bibitem[Rath, Sun and Yamashige(1995)Rath, Sun and Yamashige]{RSY1995} K.~P.~Rath, Y.~N.~Sun and S.~Yamashige, The nonexistence of symmetric equilibria in anonymous games with compact action spaces, \textit{Journal of Mathematical Economics} \textbf{24} (1995), 331--346.

\bibitem[Sagara(2015)]{Sagara2015} N.~Sagara, An indirect method of nonconvex variational problems in Asplund spaces: The case for saturated measure spaces, \textit{SIAM Journal on Control and Optimization} \textbf{53} (2015), 336-351.

\bibitem[Sun(2016]{Sun2016} X.~Sun, Independent random partial matching with general types, \textit{Advances in Mathematics} \textbf{286} (2016), 683--702.

\bibitem[Sun and Zhang(2015]{SZ2015} X.~Sun and Y.~Zhang, Pure-strategy Nash equilibria in nonatomic games with infinite-dimensional action spaces, \textit{Economic Theory} \textbf{58} (2015), 161--182.

\bibitem[Sun(1996)]{Sun1996} Y.~N.~Sun, Distributional properties of correspondences on Loeb spaces, \textit{Journal of Functional Analysis} \textbf{139} (1996), 68--93.

\bibitem[Sun(1997)]{Sun1997} Y.~N.~Sun, Integration of correspondences on Loeb spaces, \textit{Transactions of the American Mathematical Society} \textbf{349} (1997), 129--153.

\bibitem[Sun and Yannelis(2008)]{SY2008} Y.~N.~Sun and N.~C.~Yannelis, Saturation and the integration of Banach valued correspondences, \textit{Journal of Mathematical Economics} \textbf{44} (2008), 861--865.
\end{thebibliography}
\end{document}